\definecolor{tableheader}{HTML}{4F81BD}
\newcommand{\shallowddots}{%
  \rotatebox[origin=c]{12}{$\ddots$}%
}
\newtheorem{theorem}{Theorem}[section]
\newtheorem{lemma}[theorem]{Lemma}
\newtheorem{corollary}[theorem]{Corollary}
\newtheorem{remark}[theorem]{Remark}
\newcommand{\R}{\mathbb{R}}
\newcommand{\la}{{\langle}}
\newcommand{\ra}{{\rangle}}
\newcommand{\beq}{\begin{equation}}
\newcommand{\eeq}{\end{equation}}
\newcommand{\bi}{\begin{itemize}}
\newcommand{\ei}{\end{itemize}}
\newcommand{\ba}{\begin{array}}
\newcommand{\ea}{\end{array}}
\newlength{\eqAlgoAfter}
\newlength{\eqAlgoBefore}
\begin{document}

\title{\textbf{On the Complexity of Lower-Order Implementations of Higher-Order Methods}}

\author{
Nikita Doikov\thanks{Machine Learning and Optimization Laboratory (MLO), École Polytechnique Fédérale de
	Lausanne (EPFL), Lausanne, Switzerland (nikita.doikov@epfl.ch). This author was supported by the Swiss State Secretariat for Education, Research and Innovation (SERI)
	under contract number 22.00133.}
    \and
Geovani Nunes Grapiglia\thanks{Department of Mathematical Engineering, ICTEAM Institute, Université catholique de Louvain, B-1348 Louvain-la-Neuve, Belgium (geovani.grapiglia@uclouvain.be). 
		     This author was partially supported by FRS-FNRS, Belgium (Grant CDR J.0081.23).} 
}

\date{October 9, 2025}

\maketitle

\begin{abstract}
In this work, we propose a method for minimizing non-convex functions with Lipschitz continuous $p$th-order derivatives, starting from $p \geq 1$. The method, however, only requires derivative information up to order $(p-1)$, since the $p$th-order derivatives are approximated via finite differences. To ensure oracle efficiency, instead of computing finite-difference approximations at every iteration, we reuse each approximation for $m$ consecutive iterations before recomputing it, with $m \geq 1$ as a key parameter. As a result, we obtain an adaptive method of order $(p-1)$ that requires no more than $\mathcal{O}(\epsilon^{-\frac{p+1}{p}})$ iterations to find an $\epsilon$-approximate stationary point of the objective function and that, for the choice $m=(p-1)n + 1$, where $n$ is the problem dimension, takes no more than $\mathcal{O}(n^{1/p}\epsilon^{-\frac{p+1}{p}})$ oracle calls of order $(p-1)$.
This improves previously known bounds for tensor methods with finite-difference approximations
in terms of the problem dimension.
\\[0.2cm]
{\bf Keywords:} Nonconvex Optimization; Higher-Order Methods; Tensor Methods; Finite Difference; Worst-Case Complexity
\end{abstract}

\section{Introduction}

\subsection{Motivation}

We consider the problem of unconstrained minimization of a smooth function $f:\mathbb{R}^n \to \mathbb{R}$, which can be non-convex. The choice of appropriate algorithms, as well as their performance guarantees, depends fundamentally on the smoothness properties of $f$ and on the extent to which its derivative information can be computed. When for a given $x \in \mathbb{R}^{n}$ one can evaluate $f(x)$ as well as all derivatives $\nabla^{i} f(x)$ for $i = 1, \ldots, q$ for some $q \leq p$, we say that a $q$th-order oracle for $f$ is available. Any algorithm that relies on such an oracle is referred to as a $q$th-order method.

In this context, the Gradient Method is a first-order method that, when applied to an objective function $f$ with the Lipschitz continuous gradient, requires at most $\mathcal{O}\big(\epsilon^{-2}\big)$ calls to the first-order oracle to find an $\epsilon$-approximate stationary point \cite{Nes}; that is, a point $\bar{x}$ satisfying $\|\nabla f(\bar{x})\| \le \epsilon$.

Turning to second-order methods, Newton's Method with Cubic Regularization \cite{griewank1981modification,NesPol,cartis2011adaptive1}, when applied to an objective function $f$ with an Lipschitz continuous Hessian, requires at most $\mathcal{O}(\epsilon^{-3/2})$ calls to the second-order oracle to find an $\epsilon$-approximate stationary point~\cite{NesPol}. More generally, if $f$ has a Lipschitz continuous $p$th-order derivative, the methods based on regularized $p$th-order Taylor models \cite{Birgin} require at most $\mathcal{O}(\epsilon^{-(p+1)/p})$ calls to the $p$th-order oracle to find an $\epsilon$-approximate stationary point. As shown in \cite{Carmon}, this represents an optimal worst-case complexity bound for $p$th-order methods applied to the class of functions with Lipschitz continuous $p$th-order derivatives.

In minimizing a $p$-times differentiable function $f$, one often has access only to a $q$-th order oracle, where $q < p$. For example, when $f(x)$ can be evaluated only through a black-box simulation or experiment, the gradient vector $\nabla f(x)$ is not readily available. In such cases, one must rely solely on function evaluations to minimize $f$, which motivates the use of derivative-free (zeroth-order) me\-thods \cite{CSV,AudetHare,Larson}. Another example arises when calibrating the parameters of ODE or PDE models to fit a given dataset. Although the error function may be twice differentiable, computing the Hessian via adjoint equations can be computationally prohibitive or practically challenging \cite{Stapor}, forcing users to rely solely on a first-order oracle \cite{Kapfer,Plessix,Liu}. 

A common approach to exploiting $p$th-order smoothness with a lower-order oracle is to approximate the unavailable higher-order derivatives 
via \textit{finite differences} of the available lower-order information. For example, by approximating gradient vectors with forward finite differences of function values, one can design zeroth-order implementations of first-order methods \cite{grapiglia1} that find $\epsilon$-approximate stationary points using no more than $\mathcal{O}\left(n\epsilon^{-2}\right)$ calls to the zeroth-order oracle (i.e., function evaluations), where $n$ is the problem dimension. Similarly, finite differences of gradients can be used to approximate Hessian matrices in second-order methods, leading to first-order implementations \cite{CGT1,GGS} with worst-case oracle complexity $\mathcal{O}\left(n\epsilon^{-3/2}\right)$. Recently, in \cite{DoikovGrapiglia}, we established an improved complexity bound of $\mathcal{O}\left(n^{1/2}\epsilon^{-3/2}\right)$ for a \textit{lazy} variant of Newton’s Method with Cubic Regularization~\cite{doikov2023second}, in which finite-difference Hessian approximations are computed only once every $n$ iterations and kept fixed in between. Further developing the lazy technique, \cite{CartisJerad} proposed a method of order $(p-1)$ in which finite-difference approximations of the $p$th-order derivatives are computed only once every $m$ iterations and updated in between via quasi-tensor rules \cite{Welzel}, where $m \in \mathbb{N}\setminus\{0\}$ is a user-defined parameter. It was shown in \cite{CartisJerad} that this method requires at most 
$\mathcal{O}\Bigl(n \max\!\bigl\{\epsilon_{1}^{-\frac{p+1}{p}}, \, \epsilon_{2}^{-\frac{p+1}{p-1}}\bigr\}\Bigr)$ calls to the oracle of order $(p-1)$ to find an $(\epsilon_{1},\epsilon_{2})$-approximate second-order stationary point, i.e., an $\epsilon_{1}$-approximate stationary point of $f$ at which the smallest eigenvalue of the Hessian is at least~$-\epsilon_{2}$.

\subsection{Contributions}

In this work, we propose a new method of order $(p-1)$ for minimizing $p$-times differentiable functions with Lipschitz continuous $p$th-order derivatives. To ensure oracle efficiency, we extend the lazy approach from \cite{DoikovGrapiglia} to the general case $p \ge 1$, where each finite-difference approximation of the $p$th-order derivative is reused for at most $m$ consecutive iterations before a new approximation is computed. We show that, by choosing $m = (p-1)n + 1$, our method requires at most 
$$
\ba{c}
\mathcal{O}\Big(n^{1/p} \epsilon^{-\frac{p+1}{p}}\Big)
\ea
$$ 
calls to the oracle of order $(p-1)$ to find an $\epsilon$-approximate stationary point. With respect to explicit dependence on the problem dimension $n$, this is, to our knowledge, the sharpest known oracle complexity bound for $(p-1)$th-order methods applied to functions with Lipschitz-continuous $p$th derivatives.
Importantly, our method is adaptive, automatically adjusting the Lipschitz constant of the $p$th derivative and the parameter of the finite-difference approximation, without the need to fix them in advance.

\subsection{Contents}

The paper is organized as follows. 
Section~\ref{SectionProblem} introduces the problem and presents the necessary au\-xiliary results. 
In Section~\ref{SectionMethod}, we describe our new method and analyze its worst-case oracle complexity. 
Section~\ref{SectionDiscussion} concludes with a discussion of open problems and directions for future research.

\subsection{Notation}
\label{SectionNotation}

For a differentiable function $f: \R^n \to \R$,  we denote by $\nabla^q f(x)$
the derivative of order $q \geq 1$,
which is a $q$-linear symmetric form.
The value of this form on a set of fixed directions $h_1, \ldots, h_q \in \R^n$
is denoted by
$
\nabla^q f(x)[h_1, \ldots, h_q]  \in  \R,
$
which is the $q$th-order directional derivative of $f$ along the given directions.
When all directions are the same, $h_1 \equiv \ldots \equiv h_q \equiv h \in \R^n$
we use the shorthand $\nabla^q f(x)[h]^q \in \R$.
More generally, for an arbitrary $1 \leq \ell \leq q$, we use the convenient notation $\nabla^q f(x)[h]^{\ell}$
for the $(q - \ell)$-form with the first $\ell$ directions substituted as $h$; by definition, it satisfies
$$
\ba{rcl}
\nabla^q f(x)[h]^{\ell} [u_1, \ldots, u_{q - \ell}]  
&\equiv& \nabla^q f(x)[h, \ldots h, u_1, \ldots, u_{q - \ell}] 
\;\; \in \;\; \R,
\quad \text{for all }
u_1, \ldots, u_{q - \ell} \in \R^n.
\ea
$$

We fix the standard Euclidean inner product in our space,
$\la x, y \ra := \sum_{i = 1}^n x^{(i)} y^{(i)}$, for any $x, y \in \R^n$,
and denote by $e_1, \ldots, e_n \in \R^n$ the canonical basis.
Using the inner product, we treat the $1$-form $\nabla f(x) \in \R^n$ as the gradient vector:
$
\la \nabla f(x), u \ra \equiv \nabla f(x)[u]  \in \R$
for  $x, u \in \R^n$. And for general $q \geq 1$ and a fixed direction $h \in \R^n$,
we can treat $\nabla^q f(x)[h]^{q - 1} \in \R^n$ as the vector, which satisfies
$\la \nabla^q f(x)[h]^{q - 1}, u \ra \equiv \nabla^q f(x)[h]^{q - 1}[u] \in \R$, for all $x, h, u \in \R^n$.

We denote by $\| \cdot \|$ the standard Euclidean norm for vectors, $\| x \| := \la x, x \ra^{1/2}$, $x \in \R^n$.
Correspondingly, for an arbitrary $q$-linear (not necessary symmetric) function $T[h_1, \ldots, h_q] \in \R$, $q \geq 1$ we use the induced operator norm:
$$
\ba{rcl}
\|T\| & := & \max\limits_{\substack{h_1, \ldots, h_q \in \R^n : \\ \| h_i \| \leq 1,  \forall 1 \leq i \leq q}}
| T[h_1, \ldots, h_q] |.
\ea
$$
When $T$ is symmetric, it holds (see, e.g., Appendix~1 in \cite{nesterov1994interior}) $\|T\| = \max_{h \in \R^n : \| h \| \leq 1} |T[h]^q|$.

In what follows, we will use the following construction. For a given symmetric $(q - 1)$-linear form~$E$,
and an arbitrary vector $e \in \R^n$, we denote by $T = E \otimes e$
the $q$-linear form, defined by
$$
\ba{rcl}
T[h_1, \ldots h_q] & \equiv & E[h_1, \ldots, h_{q - 1}] \cdot \la e, h_q \ra \;\; \in \;\; \R,
\qquad
h_1, \ldots, h_q \in \R^n.
\ea
$$
It holds that $\|T\| = \|E \| \cdot \| e\|$. 
Clearly, $T$ is not symmetric in general. 
Introducing the function $g(h) = \frac{1}{q} T[h]^q \in \R$, $h \in \R^{n}$, we find that
its gradient vector is equal to
\beq \label{EeGrad}
\ba{rcl}
\nabla g(h) & = & \frac{1}{q}\Bigl( 
E[h]^{q - 1} e + (q - 1) \la e, h \ra E[h]^{q - 2}
\Bigr)
\;\; \in \;\; \R^n.
\ea
\eeq
It is possible to make tensor $T$ symmetric, using the following standard symmetrization operation:
$$
\ba{rcl}
P_{sym}(T)[h_1, \ldots, h_q]
& := & \frac{1}{q!} \sum\limits_{\sigma \in S_q} T[ h_{\sigma(1)}, \ldots, h_{\sigma(q)} ],
\ea
$$
for which we have: $P_{sym}(T)[h]^q \equiv T[h]^q$, $\nabla_h P_{sym}(T)[h]^q \equiv q P_{sym}(T)[h]^{q - 1}$,  
and $\| P_{sym}(T)\| \leq \| T \|$.

Throughout our analysis, we will use Young's inequality:
\begin{equation} \label{Young}
	\ba{rcl}
	xy & \leq & \frac{x^{\alpha}}{\alpha} + \frac{y^{\beta}}{\beta} 
	\qquad \text{for } 
	x, y \ge 0 \text{ and } \frac{1}{\alpha} + \frac{1}{\beta} = 1, \, \alpha,\beta > 1.
	\ea
\end{equation}

\section{Problem Formulation and Auxiliary Results}
\label{SectionProblem}

In this work we consider the smooth unconstrained optimization problem
$$
\ba{c}
\min\limits_{x \in \mathbb{R}^{n}}\,f(x),
\ea
$$
under the following assumptions:
\begin{mdframed}
\begin{itemize}
\item[\textbf{A1.}] $f:\mathbb{R}^{n}\to\mathbb{R}$ is a $p$-times differentiable function with $L$-Lipschitz continuous $p$th-order derivative ($p \geq 1$).
Thus,
\beq \label{LipTensor}
\ba{rcl}
\| \nabla^p f(x) - \nabla^p f(y) \| & \leq & L \|x - y\|, \qquad \forall x, y \in \R^n.
\ea
\eeq
\item[\textbf{A2.}] Function $f$ is bounded from below:
$f(x)\geq f_{low}$, for all $x\in\mathbb{R}^{n}$.
\end{itemize}
\end{mdframed}
It follows from A1 that
\begin{equation} \label{UpperTaylor}
\ba{rcl}
f(y) 
& \leq & 
\Phi_{x,p}(y) + 
\frac{L}{(p+1)!}\|y-x\|^{p+1},
\qquad\forall x,y \in\mathbb{R}^{n},
\ea
\end{equation}
where
$$
\ba{rcl}
\Phi_{x,p}(y)
& \equiv &
f(x) + \sum\limits_{i=1}^{p}\frac{1}{i!}\nabla^{i}f(x)[y-x]^{i}
\ea
$$
defines the $p$th-order Taylor polynomial of $f(\,\cdot\,)$ around $x$. Moreover, we also have
the global bound for the gradient approximation:
\begin{equation} \label{GradApprox}
\ba{rcl}
\|\nabla f(y)-\nabla \Phi_{x,p}(y)\|
& \leq  &
\frac{L}{p!}\|y-x\|^{p},
\qquad\forall x,y\in\mathbb{R}^{n}.
\ea
\end{equation}

Given a point $\bar{x}\in\mathbb{R}^{n}$
and a regularization parameter $\sigma > 0$, we define the following augmented Taylor approximation
\beq \label{OmegaDef}
\ba{rcl}
\Omega_{\bar{x}, \sigma, p}(y)
& := & 
f(\bar{x}) 
+ \sum\limits_{i=1}^{p-1}\frac{1}{i!}\nabla^{i}f(\bar{x})[y-\bar{x}]^{i}
+ \frac{1}{p!} \nabla^p f(\bar{x})[y - \bar{x}]^p
+ \frac{\sigma}{(p + 1)!} \|y - \bar{x}\|^{p + 1}.
\ea
\eeq
According to~\eqref{UpperTaylor}, this is a global \textit{upper approximation}
of our objective, i.e., $f(y) \leq \Omega_{\bar{x}, \sigma, p}(y)$, when the regularization parameter is large enough: $\sigma \geq L$.
However, in general, this approximation might not be convex.
 It was shown in~\cite{nesterov2021implementable}
 that for convex function $f(\cdot)$,
 the augmented Taylor model~\eqref{OmegaDef} is convex for $\sigma \geq pL$, and, therefore, its minimizer can be efficiently computed.

Let us consider a $p$-linear tensor $T$ that approximates $p$th-order derivative of the objective, $T \approx \nabla^p f(\bar{x})$,
and use it in the following model:
\begin{equation} \label{ModelDef}
\ba{rcl}
	M_{\bar{x},\sigma,p}(y)
	& := & 
	f(\bar{x})
	+ \sum\limits_{i=1}^{p-1}\frac{1}{i!}\nabla^{i}f(\bar{x})[y-\bar{x}]^{i}
	+ \frac{1}{p!}T[y - \bar{x}]^{p}
	+ \frac{\sigma}{(p+1)!}\|y - \bar{x}\|^{p+1}.
\ea
\end{equation}
Therefore, we keep all derivatives of our function up to order $(p - 1)$ in our model, while approximating the $p$th derivative.
As we will see further, such an approximation can be done efficiently using lower derivatives of the function.
Note that model~\eqref{ModelDef} depends only on the symmetric part $P_{sym}(T)$ of our tensor,
while we do not assume that $T$ is symmetric. The gradient of our model is given by
$$
\ba{rcl}
\nabla M_{\bar{x}, \sigma, p}(y) & = & 
\sum\limits_{i = 1}^{p - 1} \frac{1}{(i - 1)!} \nabla^i f(\bar{x})[y - \bar{x}]^{i - 1} + \frac{1}{(p - 1)!} P_{sym}(T)[y - \bar{x}]^{p - 1}
+ \frac{\sigma}{p!}\|y - \bar{x}\|^{p - 1}(y - \bar{x}),
\ea
$$
and, correspondingly, the difference between $\nabla M_{\bar{x}, \sigma, p}(\cdot)$ and $\nabla \Omega_{\bar{x}, \sigma, p}( \cdot )$ 
can be bounded as
\beq \label{GradDiffBound}
\ba{rcl}
& & \!\!\!\!\!\!\!\!\!\!\!\!\!\!\!\!\!
\|\nabla M_{\bar{x}, \sigma, p}(y) -  \nabla \Omega_{\bar{x}, \sigma, p}(y) \|
\;\; = \;\;
\frac{1}{(p - 1)!} \| (P_{sym}(T) -  \nabla^p f(\bar{x}) )[y - \bar{x}]^{p - 1} \| \\
\\
& \leq & 
\frac{1}{(p - 1)!} \| P_{sym}(T) -  \nabla^p f(\bar{x}) \| \cdot \|y - \bar{x} \|^{p - 1}
\;\; \leq \;\;
\frac{1}{(p - 1)!} \|T - \nabla^p f(\bar{x}) \| \cdot \|y - \bar{x} \|^{p - 1}.
\ea
\eeq

The following lemma is one of the main tools of our analysis that relates the length of the step $\|x^+ - \bar{x}\|$ of a method,
with the gradient norm $\| \nabla f(x^+)\|$ at new point, where $x^+$ is an inexact minimizer to our approximate model~\eqref{ModelDef}.

\begin{lemma}
\label{LemmaGradR}
Suppose that Assumption A1 holds and let $x^{+}$ be an inexact minimizer of $M_{\bar{x},\sigma,p}(\,\cdot\,)$, defined in (\ref{ModelDef}), satisfying the following condition 
\beq \label{InexCond}
\ba{rcl}
\|\nabla M_{\bar{x},\sigma,p}(x^{+})\|
& \leq &
\frac{\sigma}{2p!}\|x^{+}-\bar{x}\|^{p}.
\ea
\eeq
If $\sigma\geq 2L$ and, for some $z\in\mathbb{R}^{n}$ and $\delta>0$, we have
\beq \label{TCond}
\ba{rcl}
\|T-\nabla^{p}f(z)\|
&\leq&
\delta,
\ea
\eeq
then
\beq \label{GradUpBound}
\ba{rcl}
\|\nabla f(x^{+})\|^{\frac{p+1}{p}}
& \leq &
3^{1/p}\sigma^{1/p} \Bigl[
4\sigma \|x^{+}-\bar{x}\|^{p+1} 
+ \frac{\delta^{p+1}}{p[(p-1)!]^{p+1}\sigma^{p}} 
+ \frac{L^{p+1}\|z-\bar{x}\|^{p+1}}{p[(p-1)!]^{p+1}\sigma^{p}}
\Bigr].
\ea
\eeq
\end{lemma}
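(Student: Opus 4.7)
The strategy is to decompose $\nabla f(x^+)$ by inserting the intermediate gradients $\nabla\Phi_{\bar{x},p}(x^+)$, $\nabla\Omega_{\bar{x},\sigma,p}(x^+)$, and $\nabla M_{\bar{x},\sigma,p}(x^+)$ via the triangle inequality, bound each of the four resulting differences by a readily available ingredient, combine them, raise both sides to the power $(p+1)/p$, and finally apply Young's inequality to convert the $\delta$ and $L\|z-\bar{x}\|$ contributions into the form of \eqref{GradUpBound}.

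Concretely, the first step bounds the four differences as follows. The term $\|\nabla f(x^+)-\nabla\Phi_{\bar{x},p}(x^+)\|$ is controlled by \eqref{GradApprox} and is at most $\tfrac{L}{p!}\|x^+-\bar{x}\|^p$. The term $\|\nabla\Phi_{\bar{x},p}(x^+)-\nabla\Omega_{\bar{x},\sigma,p}(x^+)\|$ equals $\tfrac{\sigma}{p!}\|x^+-\bar{x}\|^p$ by direct inspection of \eqref{OmegaDef}. The term $\|\nabla\Omega_{\bar{x},\sigma,p}(x^+)-\nabla M_{\bar{x},\sigma,p}(x^+)\|$ is at most $\tfrac{1}{(p-1)!}(\delta+L\|z-\bar{x}\|)\|x^+-\bar{x}\|^{p-1}$ by \eqref{GradDiffBound} combined with the auxiliary estimate $\|T-\nabla^p f(\bar{x})\|\le\|T-\nabla^p f(z)\|+\|\nabla^p f(z)-\nabla^p f(\bar{x})\|\le\delta+L\|z-\bar{x}\|$, which uses \eqref{TCond} and \eqref{LipTensor}. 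Finally $\|\nabla M_{\bar{x},\sigma,p}(x^+)\|\le\tfrac{\sigma}{2p!}\|x^+-\bar{x}\|^p$ by the inexactness hypothesis \eqref{InexCond}. Summing and using $\sigma\ge 2L$ to absorb the first term into the last yields the clean estimate
$$
\|\nabla f(x^+)\| \,\le\, \frac{2\sigma}{p!}\|x^+-\bar{x}\|^p \,+\, \frac{\delta}{(p-1)!}\|x^+-\bar{x}\|^{p-1} \,+\, \frac{L\|z-\bar{x}\|}{(p-1)!}\|x^+-\bar{x}\|^{p-1}.
$$

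The second step raises this to the power $(p+1)/p$ and invokes $(a+b+c)^{(p+1)/p}\le 3^{1/p}(a^{(p+1)/p}+b^{(p+1)/p}+c^{(p+1)/p})$, producing a $3^{1/p}$ outside and three separate contributions. The first already has the correct homogeneity $\sigma^{(p+1)/p}\|x^+-\bar{x}\|^{p+1}$, with the elementary check $(2/p!)^{(p+1)/p}\le 4$. For the two remaining contributions — each of the shape $(\text{constant})^{(p+1)/p}\,\|x^+-\bar{x}\|^{(p-1)(p+1)/p}$ — I apply Young's inequality \eqref{Young} with conjugate exponents $(\alpha,\beta)=(p/(p-1),\,p)$ and with the weight $\lambda=\sigma^{(p^2-1)/p^2}$. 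This choice is designed precisely so that the $\alpha$-power produces $\sigma^{(p+1)/p}\|x^+-\bar{x}\|^{p+1}$ (with coefficient $(p-1)/p$) while the $\beta$-power produces $\delta^{p+1}/\sigma^{(p^2-1)/p}$ (respectively $L^{p+1}\|z-\bar{x}\|^{p+1}/\sigma^{(p^2-1)/p}$) with coefficient $1/p$ times the combinatorial prefactor $[(p-1)!]^{-(p+1)}$.

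The main obstacle is the bookkeeping at the end: one must verify that the total coefficient of $\sigma^{(p+1)/p}\|x^+-\bar{x}\|^{p+1}$, namely $(2/p!)^{(p+1)/p}+2(p-1)/p$, is at most $4$ for all $p\ge 1$ (it equals $4$ exactly at $p=1$ and is strictly smaller otherwise), and then to recognize that the remaining two terms, after factoring out $3^{1/p}\sigma^{1/p}$ from everything via $\sigma^{(p+1)/p}=\sigma^{1/p}\cdot\sigma$ and $\sigma^{-(p^2-1)/p}=\sigma^{1/p}\cdot\sigma^{-p}$, coincide verbatim with the corresponding summands of \eqref{GradUpBound}. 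The degenerate case $p=1$ is worth a separate check because Young's exponent $p/(p-1)$ diverges: there $\|x^+-\bar{x}\|^{p-1}=1$, so the Young step is not needed and the two error terms are already pure constants $\delta$ and $L\|z-\bar{x}\|$, which after squaring match the right-hand side of \eqref{GradUpBound} exactly.
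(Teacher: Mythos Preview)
Your proposal is correct and follows essentially the same argument as the paper's own proof: the same triangle-inequality decomposition leading to $\|\nabla f(x^+)\|\le \tfrac{2\sigma}{p!}r^p+\tfrac{\delta+L\|z-\bar{x}\|}{(p-1)!}r^{p-1}$, the same power inequality with exponent $(p+1)/p$, the same Young splitting with conjugate pair $(p,\tfrac{p}{p-1})$, and the same separate treatment of $p=1$. The only cosmetic difference is that the paper factors out $\sigma^{1/p}$ \emph{before} applying Young and bounds the resulting coefficient of $\sigma r^{p+1}$ by $3$ (using $(2/p!)^{(p+1)/p}\le 1$ for $p\ge 2$), whereas you factor it out \emph{after} and verify directly that $(2/p!)^{(p+1)/p}+2(p-1)/p\le 4$; both routes are equivalent.
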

\begin{proof}
Let us denote $r = \|x^{+}-\bar{x}\|$. From the previous reasoning, triangle inequality, and the equation $\nabla \Omega_{\bar{x}, \sigma, p}(x^+) = \nabla \Phi_{\bar{x}, \sigma, p}(x^+) + \frac{\sigma}{p!}r^{p - 1} (x^+ - x)$,
it follows that
\begin{equation} \label{GradReasoning}
\ba{rcl}
 & & \!\!\!\!\!\!\!\!\!\!\!\!\!\!\!\!\!\!\!\!\!\!\!\! 
 \|\nabla f(x^{+})\| \\
 \\
 &\leq& \|\nabla f(x^{+})-\nabla\Omega_{\bar{x},\sigma,p}(x^{+})\|+\|\nabla\Omega_{\bar{x},\sigma,p}(x^{+})-\nabla M_{\bar{x},\sigma,p}(x^{+})\|+\|\nabla M_{\bar{x},\sigma,p}(x^{+})\| \\
\\
&\overset{(\ref{GradDiffBound}),(\ref{InexCond})}{\leq}&
\|\nabla f(x^{+}) - \nabla \Phi_{x,p}(x^{+})\|
+ \frac{\sigma}{p!}r^{p}
+ \frac{1}{(p-1)!} \bigl\| T-\nabla^{p}f(\bar{x})) \bigr\| r^{p - 1}
+ \frac{\sigma}{2p!}r^{p} \\
\\
&\overset{(\ref{GradApprox})}{\leq}& \bigl(\frac{L}{p!}+\frac{\sigma}{p!} + \frac{\sigma}{2p!}\bigr)r^{p}
+ \frac{1}{(p-1)!}\|T-\nabla^{p}f(\bar{x})\|r^{p-1} \\
\\
&\leq& 
\bigl(\frac{L}{p!}+\frac{\sigma}{p!}+\frac{\sigma}{2p!}\bigr) r^{p}
+ \frac{1}{(p-1)!} \bigl(\|T-\nabla^{p}f(z)\|
+ \|\nabla^{p}f(z)-\nabla^{p}f(\bar{x})\| \bigr)r^{p-1} \\
\\
&\overset{(\ref{TCond}),(\ref{LipTensor})}{\leq}& 
\frac{2\sigma}{p!}r^{p}
+ \Bigl(\frac{\delta}{(p-1)!}+\frac{L}{(p-1)!} \|z - \bar{x}\| \Bigr)r^{p-1},
\ea
\end{equation}
where in the last inequality we also used that $\sigma\geq 2L$. In view of (\ref{GradReasoning}) and using the inequality $(a+b+c)^{q}\leq 3^{q-1}(a^{q}+b^{q}+c^{q})$ for $a,b,c\geq 0$ and $q := \frac{p + 1}{p} \geq 1$, we get
\small
\beq \label{GradReasoning2}
\ba{rcl}
\|\nabla f(x^{+})\|^{\frac{p+1}{p}}
&\leq&
\Bigl[ \frac{2\sigma}{p!}r^{p}+\frac{\delta}{(p-1)!}r^{p-1}+\frac{L}{(p-1)!}\|z-\bar{x}\|r^{p-1}\Bigr]^{\frac{p+1}{p}} \\
\\
&\leq & 
3^{1/p}\Bigl[ \bigl(\frac{2}{p!}\bigr)^{\frac{p+1}{p}} \sigma^{\frac{p+1}{p}}r^{p+1} + \frac{\delta^{\frac{p+1}{p}}}{[(p-1)!]^{\frac{p+1}{p}}} r^{\frac{(p-1)(p+1)}{p}}
+ \frac{L^{\frac{p+1}{p}}\|z-\bar{x}\|^{\frac{p+1}{p}}}{[(p-1)!]^{\frac{p+1}{p}}}r^{\frac{(p-1)(p+1)}{p}} \Bigr] \\
\\
&=&
3^{1/p}\sigma^{1/p} \Bigl[ \bigl(\frac{2}{p!}\bigr)^{\frac{p+1}{p}}\sigma r^{p+1}
+ \frac{\delta^{\frac{p+1}{p}}}{[(p-1)!]^{\frac{p+1}{p}}\sigma^{1/p}}r^{\frac{(p-1)(p+1)}{p}} 
+ \frac{L^{\frac{p+1}{p}}\|z-\bar{x}\|^{\frac{p+1}{p}}}{[(p-1)!]^{\frac{p+1}{p}}\sigma^{1/p}}r^{\frac{(p-1)(p+1)}{p}} \Bigr].
\ea
\eeq
\normalsize
When $p = 1$, this inequality immediately gives~(\ref{GradUpBound}). Now, we assume that $p \geq 2$.

Using Young's inequality \eqref{Young} with $\alpha = p$ and $\beta = \frac{p}{p-1} > 1$, the second term inside the brackets in (\ref{GradReasoning2}) can be bounded as follows
\beq \label{GradReasoning3}
\ba{rcl}
    \frac{\delta^{\frac{p+1}{p}}}{[(p - 1)!]^{\frac{p+1}{p}}\sigma^{1/p}}r^{\frac{(p-1)(p+1)}{p}}
    &=&
    \left( \frac{\delta^{\frac{p+1}{p}}}{[(p-1)!]^{\frac{p+1}{p}}\sigma} \right) \left(\sigma^{\frac{p-1}{p}}r^{\frac{(p-1)(p+1)}{p}} \right) \\
    \\
    &\leq & 
    \frac{1}{p} \left( \frac{\delta^{\frac{p+1}{p}}}{[(p-1)!]^{\frac{p+1}{p}}\sigma} \right)^{p}  
    + \frac{p - 1}{p}  \left( \sigma^{\frac{p-1}{p}}r^{\frac{(p-1)(p+1)}{p}} \right)^{\frac{p}{p-1}}  \\
    \\
    &=&
    \frac{\delta^{p+1}}{p[(p-1)!]^{p+1}\sigma^{p}} + \frac{(p-1)}{p}\sigma r^{p+1} \\
    \\
    & < &
    \frac{\delta^{p+1}}{p[(p-1)!]^{p+1}\sigma^{p}} + \sigma r^{p+1}.
\ea
\eeq
Using again Young's inequality, we can also bound the third term inside the brackets in (\ref{GradReasoning2}),
\beq \label{GradReasoning4}
\ba{rcl}
    \frac{L^{\frac{p+1}{p}}\|z-\bar{x}\|^{\frac{p+1}{p}}}{[(p-1)!]^{\frac{p+1}{p}}\sigma^{1/p}}r^{\frac{(p-1)(p+1)}{p}}
    &=&
    \left(\frac{L^{\frac{p+1}{p}}\|z-\bar{x}\|^{\frac{p+1}{p}}}{[(p-1)!]^{\frac{p+1}{p}}\sigma}\right)
    \left(\sigma^{\frac{p-1}{p}}r^{\frac{(p-1)(p+1)}{p}}\right) \\
    \\
    &\leq&
    \frac{1}{p} \left(\frac{L^{\frac{p+1}{p}}\|z-\bar{x}\|^{\frac{p+1}{p}}}{[(p-1)!]^{\frac{p+1}{p}}\sigma}\right)^{p}
    + \frac{p - 1}{p} \left(\sigma^{\frac{p-1}{p}}r^{\frac{(p-1)(p+1)}{p}}\right)^{\frac{p}{p-1}} \\
    \\
    &=&
    \frac{L^{p+1}\|z-\bar{x}\|^{p+1}}{p[(p-1)!]^{p+1}\sigma^{p}} + \frac{(p-1)\sigma r^{p+1}}{p} \\
    \\
    &<& 
    \frac{L^{p+1}\|z-\bar{x}\|^{p+1}}{p[(p-1)!]^{p+1}\sigma^{p}} + \sigma r^{p+1}.
\ea
\eeq
Finally, combining (\ref{GradReasoning2}), (\ref{GradReasoning3}) and (\ref{GradReasoning4}), and using the inequality 
$\bigl(\frac{2}{p!}\bigr)^{\frac{p+1}{p}}\leq 1$, $p \geq 2$, we conclude that
$$
\ba{rcl}
    \|\nabla f(x^{+})\|^{\frac{p+1}{p}}
    & \leq &
    3^{1/p}\sigma^{1/p}\left[ 3\sigma r^{p+1} 
    	+ \frac{\delta^{p+1}}{p[(p-1)!]^{p+1}\sigma^{p}}
    	+ \frac{L^{p+1}\|z-\bar{x}\|^{p+1}}{p[(p-1)!]^{p+1}\sigma^{p}}\right] \\
        \\
    & \leq & 
        3^{1/p}\sigma^{1/p}\left[ 4\sigma r^{p+1} 
    	+ \frac{\delta^{p+1}}{p[(p-1)!]^{p+1}\sigma^{p}}
    	+ \frac{L^{p+1}\|z-\bar{x}\|^{p+1}}{p[(p-1)!]^{p+1}\sigma^{p}}\right].
\ea
$$
That is, (\ref{GradUpBound}) is true for all $p \geq 1$.
\end{proof}

The next lemma shows that minimizing an approximate upper model of the objective leads to a progress in terms of the function value.

\begin{lemma} \label{LemmaFuncProg}
Suppose that Assumption A1 holds and let $x^{+}$ be an inexact minimizer of $M_{\bar{x},\sigma,p}(\,\cdot\,)$, defined in (\ref{ModelDef}), satisfying the following condition
\begin{equation} \label{MonCond}
\ba{rcl}
M_{\bar{x},\sigma,p}(x^{+}) & \leq & f(\bar{x}).
\ea
\end{equation}
If $\sigma\geq 2L$ and, for some $z\in\mathbb{R}^{n}$ and $\delta>0$, (\ref{TCond}) holds, then
\begin{equation} \label{FuncProg}
\ba{rcl}
f(\bar{x})-f(x^{+})
& \geq & 
\frac{\sigma}{4\cdot (p+1)!}\|x^{+}-\bar{x}\|^{p+1}
- \frac{(8(p + 1))^p \cdot\left( \delta^{p+1}+L^{p+1}\|z-\bar{x}\|^{p+1} \right)}{\sigma^{p}\cdot (p+1)!}.
\ea
\end{equation}
\end{lemma}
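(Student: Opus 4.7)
The plan is to chain together three comparisons: first, use the global upper Taylor bound (\ref{UpperTaylor}) to relate $f(x^+)$ to the augmented Taylor model $\Omega_{\bar{x},\sigma,p}(x^+)$; second, control the function-value gap between $\Omega_{\bar{x},\sigma,p}(x^+)$ and the surrogate $M_{\bar{x},\sigma,p}(x^+)$ via the tensor approximation error, in the same spirit as the gradient bound (\ref{GradDiffBound}); third, invoke the monotonicity hypothesis (\ref{MonCond}) to replace $M_{\bar{x},\sigma,p}(x^+)$ by $f(\bar{x})$. This produces a preliminary bound of the form (positive term in $r^{p+1}$) minus (error term in $r^p$), where $r:=\|x^+-\bar{x}\|$. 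A scaled Young's inequality then absorbs the $r^p$ error into a fraction of the $r^{p+1}$ term plus an additive constant depending only on $\delta$ and $L\|z-\bar{x}\|$, yielding (\ref{FuncProg}).

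Concretely, observe that $\Omega_{\bar{x},\sigma,p}(y)=\Phi_{\bar{x},p}(y)+\frac{\sigma}{(p+1)!}\|y-\bar{x}\|^{p+1}$, so (\ref{UpperTaylor}) combined with $\sigma\geq 2L$ gives $f(x^+)\leq \Omega_{\bar{x},\sigma,p}(x^+)-\frac{\sigma}{2(p+1)!}r^{p+1}$. Since $M_{\bar{x},\sigma,p}$ differs from $\Omega_{\bar{x},\sigma,p}$ only in replacing $\nabla^p f(\bar{x})$ by $T$ in the $p$th-order term, one has
$$
|\Omega_{\bar{x},\sigma,p}(x^+)-M_{\bar{x},\sigma,p}(x^+)|
\;=\;\frac{1}{p!}\bigl|(\nabla^p f(\bar{x})-T)[x^+-\bar{x}]^p\bigr|
\;\leq\;\frac{1}{p!}\|T-\nabla^p f(\bar{x})\|\,r^p,
$$
and the triangle inequality together with (\ref{TCond}) and (\ref{LipTensor}) yields $\|T-\nabla^p f(\bar{x})\|\leq \delta+L\|z-\bar{x}\|$. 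Combining these estimates with (\ref{MonCond}) produces the intermediate bound
$$
f(\bar{x})-f(x^+)\;\geq\;\frac{\sigma}{2(p+1)!}\,r^{p+1}\;-\;\frac{\delta+L\|z-\bar{x}\|}{p!}\,r^p.
$$

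To conclude, I split the $r^p$ error as $\frac{\delta}{p!}r^p+\frac{L\|z-\bar{x}\|}{p!}r^p$ and apply Young's inequality (\ref{Young}) to each piece with conjugate exponents $\alpha=p+1$ and $\beta=(p+1)/p$, inserting a scale factor $K>0$ chosen so that each piece contributes at most $\frac{\sigma}{8(p+1)!}r^{p+1}$ on the "bad" side. The main obstacle is simply the bookkeeping: one must select $K$ so that $\frac{p}{(p+1)K^{(p+1)/p}}=\frac{\sigma}{8(p+1)!}$, which forces $K^{p+1}=(8p\cdot p!/\sigma)^p$, and then check that the residuals cleanly collapse into $\frac{(8p)^p(\delta^{p+1}+L^{p+1}\|z-\bar{x}\|^{p+1})}{\sigma^p(p+1)!}$. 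Using the trivial inequality $(8p)^p\leq (8(p+1))^p$ yields precisely the constant appearing in (\ref{FuncProg}), and subtracting the two $\frac{\sigma}{8(p+1)!}r^{p+1}$ contributions from $\frac{\sigma}{2(p+1)!}r^{p+1}$ leaves the claimed $\frac{\sigma}{4(p+1)!}r^{p+1}$.
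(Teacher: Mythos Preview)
Your proof is correct and follows essentially the same route as the paper. The only cosmetic difference is that the paper applies Young's inequality once to the combined term $\tfrac{1}{p!}(\delta+L\|z-\bar{x}\|)r^{p}$ and afterwards splits via $(a+b)^{p+1}\le 2^{p}(a^{p+1}+b^{p+1})$, whereas you split first and apply Young's inequality to each piece; your version in fact yields the slightly sharper constant $(8p)^{p}$ before you relax it to $(8(p+1))^{p}$.
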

\begin{proof}
Let $r=\|x^{+} - \bar{x} \|$. We have
\beq \label{FuncReasoning}
\ba{rcl}
    f(x^{+})
    &\overset{(\ref{UpperTaylor})}{\leq}&
    \Omega_{\bar{x},L,p}(x^{+}) 
    \;\; = \;\;
    \Omega_{\bar{x},\sigma,p}(x^{+}) + \frac{L-\sigma}{(p+1)!}r^{p+1} \\
    \\
    &=& 
    M_{\bar{x},\sigma,p}(x^{+})+\frac{1}{p!} \left(\nabla^{p}f(\bar{x}) - T\right)[x^{+} - \bar{x}]^{p}
    + \frac{L-\sigma}{(p+1)!}r^{p+1} \\
    \\
    &\leq& 
    f(\bar{x}) + \frac{1}{p!}\|\nabla^{p}f(\bar{x}) - T\|r^{p}+\frac{L-\sigma}{(p+1)!}r^{p+1} \\
    \\
    &\leq& 
    f(\bar{x}) + \frac{1}{p!}\left(\|\nabla^{p}f(\bar{x}) - \nabla^{p}f(z)\|+\|\nabla^{p}f(z)-T\|\right)r^{p} + \frac{L-\sigma}{(p+1)!}r^{p+1} \\
    \\
    &\overset{(\ref{LipTensor}), (\ref{TCond})}{\leq}& 
    f(\bar{x}) + \frac{1}{p!}\left(L\|z-\bar{x}\| + \delta\right)r^{p} - \frac{\sigma}{2\cdot (p+1)!}r^{p+1},
\ea
\eeq
where in the last inequality we used the assumption $\sigma\geq 2L$. Using Young's inequality~\eqref{Young} with $\alpha = p+1$ and $\beta = \frac{p+1}{p}$, and the inequality $(a+b)^{q}\leq 2^{q-1}(a^{q}+b^{q})$ for $a,b\geq 0$ and $q := p + 1$, we get
\beq \label{FuncReasoning2}
\ba{rcl}
    \frac{1}{p!}\left(L\|z-\bar{x}\| + \delta\right)r^{p}
    &=&
    \left(\frac{4^{\frac{p}{p+1}}[(p + 1)!]^{\frac{p}{p+1}}\left(\delta+L\|z-\bar{x}\|\right)}{\sigma^{\frac{p}{p+1}}\cdot p!}\right) 
    \left(\frac{\sigma^{\frac{p}{p+1}}r^{p}}{4^{\frac{p}{p+1}}[(p+1)!]^{\frac{p}{p+1}}}\right) \\
    \\
    &\leq&
    \frac{1}{p + 1} \left(\frac{4^{\frac{p}{p+1}}[(p + 1)!]^{\frac{p}{p+1}}\left(\delta+L\|z-\bar{x}\|\right)}{\sigma^{\frac{p}{p+1}}\cdot p!}\right)^{p+1}
    + \frac{p}{p + 1} \left(\frac{\sigma^{\frac{p}{p+1}}r^{p}}{4^{\frac{p}{p+1}}[(p+1)!]^{\frac{p}{p+1}}}\right)^{\frac{p+1}{p}} \\
    \\
    &=& 
    \frac{4^{p}[(p + 1)!]^{p}\left(\delta+L\|z-\bar{x}\|\right)^{p+1}}{(p+1)\cdot [p!]^{p+1}\cdot \sigma^{p}}
    + \frac{p\cdot\sigma\cdot r^{p+1}}{4(p+1)\cdot (p+1)!} \\
    \\
    &\leq& 
    \frac{(4(p + 1))^{p} \left(\delta+L\|z-\bar{x}\|\right)^{p+1}}{\sigma^{p}\cdot (p+1)!}
    + \frac{\sigma}{4\cdot (p+1)!}r^{p+1} \\
    \\
    &\leq& 
    \frac{(8(p + 1))^p \cdot\left(\delta^{p+1}+L^{p+1}\|z-\bar{x}\|^{p+1}\right)}{\sigma^{p}\cdot (p+1)!}
    + \frac{\sigma}{4\cdot (p+1)!}r^{p+1}.
\ea
\eeq
Now, combining (\ref{FuncReasoning}) and (\ref{FuncReasoning2}), it follows that
$$
\ba{rcl}
f(x^{+})
& \leq & 
f(\bar{x}) + \frac{(8(p + 1))^p  \cdot 
	\left(\delta^{p+1} + L^{p+1}\|z-\bar{x}\|^{p+1}\right)}{\sigma^{p}\cdot (p+1)!} 
	- \frac{\sigma}{4\cdot (p+1)!}r^{p+1},
\ea
$$
which implies that (\ref{FuncProg}) is true.
\end{proof}

Now, we can combine the two previous lemmas to obtain progress in the function value
in terms of the gradient norm.

\begin{lemma} \label{LemmaCombinedProg}
Suppose that Assumption A1 holds and let $x^{+}$ be an inexact minimizer of $M_{\bar{x},\sigma,p}(\,\cdot\,)$ defined in (\ref{ModelDef}), satisfying conditions (\ref{InexCond}) and (\ref{MonCond}). If $\sigma\geq 2L$ and, for some $z\in\mathbb{R}^{n}$ and $\delta>0$, (\ref{TCond}) holds, then
\small
\begin{equation} \label{FuncGradProg}
\ba{rcl}
f(\bar{x}) - f(x^{+})
&\geq& 
\frac{\|\nabla f(x^{+})\|^{\frac{p+1}{p}}}{2^5 \cdot 3^{1/p}\cdot \sigma^{1/p}\cdot (p+1)!}
+ \frac{\sigma}{8\cdot (p+1)!}\|x^{+}-\bar{x}\|^{p+1}
- \frac{c_p \cdot \left(\delta^{p+1}+L^{p+1}\|z-\bar{x}\|^{p+1}\right)}{\sigma^{p}\cdot (p+1)!},
\ea
\end{equation}
where $c_p :=  \frac{1}{2^5 \cdot p[ (p - 1)! ]^{p + 1}} + (8(p + 1))^p \leq 2 \cdot (8(p + 1))^p$.
\normalsize
\end{lemma}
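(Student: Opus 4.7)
The proof is essentially a linear combination of Lemmas~\ref{LemmaGradR} and \ref{LemmaFuncProg}, with the coefficient tuned so that the bad term $\sigma r^{p+1}$ appearing on the right-hand side of~\eqref{GradUpBound} can be absorbed into the useful term $\tfrac{\sigma}{4(p+1)!} r^{p+1}$ of~\eqref{FuncProg}. Since both prior lemmas apply under the present hypotheses (assumption A1, $\sigma\geq 2L$, \eqref{TCond}, and the inexactness conditions~\eqref{InexCond} and \eqref{MonCond}), I can use their conclusions directly.

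The plan is the following. First, I would take inequality~\eqref{GradUpBound} from Lemma~\ref{LemmaGradR} and divide both sides by $2^5\cdot 3^{1/p}\sigma^{1/p}(p+1)!$, which produces the identity
\[
\frac{\|\nabla f(x^+)\|^{(p+1)/p}}{2^5\cdot 3^{1/p}\sigma^{1/p}(p+1)!}
\;\leq\;
\frac{\sigma}{8(p+1)!}\|x^+-\bar x\|^{p+1}
+\frac{\delta^{p+1}+L^{p+1}\|z-\bar x\|^{p+1}}{2^5\,p\,[(p-1)!]^{p+1}\sigma^p(p+1)!}.
\]
The coefficient $1/2^5$ is precisely what makes the factor $4\sigma/2^5$ collapse to $\sigma/8$, matching the step-length term we want in~\eqref{FuncGradProg}.

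Second, I would split the useful step term from~\eqref{FuncProg} as
$\frac{\sigma}{4(p+1)!}\|x^+-\bar x\|^{p+1}=\frac{\sigma}{8(p+1)!}\|x^+-\bar x\|^{p+1}+\frac{\sigma}{8(p+1)!}\|x^+-\bar x\|^{p+1}$, and use the first half to absorb the right-hand side of the displayed inequality above (rearranged so that $\tfrac{\sigma}{8(p+1)!}\|x^+-\bar x\|^{p+1}$ is at least the gradient-norm term minus the error term). Substituting back into~\eqref{FuncProg} yields
\[
f(\bar x)-f(x^+)
\;\geq\;
\frac{\|\nabla f(x^+)\|^{(p+1)/p}}{2^5\cdot 3^{1/p}\sigma^{1/p}(p+1)!}
+\frac{\sigma}{8(p+1)!}\|x^+-\bar x\|^{p+1}
-\Bigl(\tfrac{1}{2^5 p[(p-1)!]^{p+1}}+(8(p+1))^p\Bigr)\frac{\delta^{p+1}+L^{p+1}\|z-\bar x\|^{p+1}}{\sigma^p(p+1)!},
\]
which is exactly~\eqref{FuncGradProg} with the stated $c_p$.

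Finally, I would verify the simplification $c_p\le 2(8(p+1))^p$. Since $p[(p-1)!]^{p+1}\ge 1$ for every $p\ge 1$, the first summand in $c_p$ is at most $1/32$, whereas $(8(p+1))^p\ge 16$ already for $p=1$, so the first summand is bounded by $(8(p+1))^p$ and thus $c_p\le 2(8(p+1))^p$. There is no real obstacle in this proof; the only tiny care required is in matching constants so that the coefficient $1/2^5$ in front of $\|\nabla f(x^+)\|^{(p+1)/p}$ precisely leaves enough budget ($\tfrac{\sigma}{8(p+1)!}$) in the step-length term to remain on the right-hand side after absorption.
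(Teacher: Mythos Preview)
Your proposal is correct and follows essentially the same approach as the paper: divide \eqref{GradUpBound} by $2^5\cdot 3^{1/p}\sigma^{1/p}(p+1)!$ so that the $4\sigma r^{p+1}$ term becomes $\tfrac{\sigma}{8(p+1)!}r^{p+1}$, split the step-length term in \eqref{FuncProg} in half, and absorb one half using the rearranged gradient bound. Your verification of $c_p\le 2(8(p+1))^p$ is also correct and in fact slightly more explicit than what the paper writes.
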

\begin{proof}
Let $r = \|x^{+} - \bar{x}\|$. In view of Lemma~\ref{LemmaGradR}, we have
\begin{equation} \label{FuncGradReasoning}
\ba{rcl}
\frac{\sigma}{8\cdot (p+1)!}r^{p+1}
& \overset{(\ref{GradUpBound})}{\geq} &
\frac{\|\nabla f(x^{+})\|^{\frac{p+1}{p}}}{2^5 \cdot 3^{1/p}
	\cdot \sigma^{1/p}\cdot (p+1)!}
- \frac{\delta^{p+1}+L^{p+1}\|z-\bar{x}\|^{p+1}}{2^5 \cdot p [(p - 1)!]^{p + 1} \cdot \sigma^{p}\cdot (p+1)!}.
\ea
\end{equation}
Then, combining (\ref{FuncGradReasoning}) with inequality (\ref{FuncProg}) in Lemma~\ref{LemmaFuncProg}, we conclude that
$$
\ba{rcl}
    & & \!\!\!\!\!\!\!\!\!\!\!\!\!\!\!\!
    f(\bar{x})-f(x^{+})
    \overset{(\ref{FuncProg})}{\geq}
    \frac{\sigma}{8\cdot (p+1)!}r^{p+1}
    + \frac{\sigma}{8\cdot (p+1)!}r^{p+1}
    - \frac{(8(p + 1))^p \cdot \left(\delta^{p+1}+L^{p+1}\|z-\bar{x}\|^{p+1}\right)}{\sigma^{p}\cdot (p+1)!} \\
    \\
    &\overset{(\ref{FuncGradReasoning})}{\geq}& 
    \frac{\|\nabla f(x^{+})\|^{\frac{p+1}{p}}}{2^5 \cdot 3^{1/p}\cdot \sigma^{1/p}\cdot (p+1)!}
    + \frac{\sigma}{8\cdot (p+1)!}r^{p+1}
    - \frac{\delta^{p+1}+L^{p+1}\|z-\bar{x}\|^{p+1}}{\sigma^{p}\cdot (p+1)!} \cdot \Bigl[ 
    \frac{1}{2^5 \cdot p[(p - 1)!]^{p + 1}}
    + (8(p + 1))^p
    \Bigr],
\ea
$$
that is (\ref{FuncGradProg}) is true.
\end{proof}

Up to this moment, we have considered an arbitrary tensor $T \approx \nabla^p f(\bar{x})$
satisfying the $\delta$-approximation guarantee~(\ref{TCond}), for some $\delta > 0$. 
In this work, we are interested
in using for $T$ the finite-difference approximation provided by the $(p - 1)$th-order derivatives.
To this end, we use the following lemma.

\begin{lemma} \label{LemmaFiniteDiff}
	Suppose that Assumption A1 holds. Given $z \in \R^n$ and $h > 0$, let
	$T$ be the $p$-linear form defined by
	\beq \label{TDef}
	\ba{rcl}
	T	& = & 
	\sum\limits_{i = 1}^n \Bigl( \frac{\nabla^{p-1 }f(z + he_{i}) - \nabla^{p-1} f(z)}{h} \Bigr) 
		\otimes e_{i},
	\ea
	\eeq
where $e_i$ is the $i$th vector of the canonical basis of $\R^n$. Then
\beq \label{TApprox}
\ba{rcl}
\| T - \nabla^p f(z) \| & \leq & \frac{L \sqrt{n}}{2} h.
\ea
\eeq
\end{lemma}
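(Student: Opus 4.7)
The plan is to reduce the estimate to a coordinate-wise Taylor bound and then glue the $n$ coordinate contributions together through Cauchy--Schwarz, which is precisely what produces the $\sqrt{n}$ factor rather than $n$.

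First, I would apply the standard integral-remainder consequence of Assumption~A1 to the $(p-1)$th derivative: since $\nabla^p f$ is $L$-Lipschitz, one obtains, for every $i$,
$$
\Bigl\| \nabla^{p-1} f(z + h e_i) - \nabla^{p-1} f(z) - h\, \nabla^p f(z)[e_i] \Bigr\|
\;\leq\; \tfrac{L}{2} h^2,
$$
where $\nabla^p f(z)[e_i]$ is the $(p-1)$-linear form obtained by plugging $e_i$ into one slot of $\nabla^p f(z)$. Dividing by $h$, the difference quotient $E_i := \bigl(\nabla^{p-1} f(z+h e_i) - \nabla^{p-1} f(z)\bigr)/h$ approximates $\nabla^p f(z)[e_i]$ up to $\tfrac{L h}{2}$ in operator norm.

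Next, I would use symmetry of $\nabla^p f(z)$ and the expansion $h_p = \sum_i \langle e_i, h_p\rangle e_i$ to represent $\nabla^p f(z)$ as the sum of $\otimes e_i$ blocks, namely
$$
\nabla^p f(z)[h_1,\ldots,h_p] \;=\; \sum_{i=1}^n \nabla^p f(z)[e_i][h_1,\ldots,h_{p-1}] \cdot \langle e_i, h_p\rangle,
$$
so that $\nabla^p f(z) = \sum_{i=1}^n \nabla^p f(z)[e_i] \otimes e_i$. Combined with the definition \eqref{TDef}, this gives
$$
T - \nabla^p f(z) \;=\; \sum_{i=1}^n \bigl( E_i - \nabla^p f(z)[e_i] \bigr) \otimes e_i.
$$

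Finally, I would estimate the operator norm by choosing arbitrary unit vectors $h_1,\ldots,h_p$, writing
$$
\bigl|(T-\nabla^p f(z))[h_1,\ldots,h_p]\bigr| \;=\; \Bigl| \sum_{i=1}^n \bigl( E_i - \nabla^p f(z)[e_i]\bigr)[h_1,\ldots,h_{p-1}] \cdot \langle e_i, h_p\rangle \Bigr|,
$$
and applying Cauchy--Schwarz in $i$. The second factor yields $\bigl(\sum_i \langle e_i,h_p\rangle^2\bigr)^{1/2} = \|h_p\| \leq 1$, and the first factor is bounded by $\bigl(\sum_i \|E_i - \nabla^p f(z)[e_i]\|^2\bigr)^{1/2} \leq \sqrt{n}\cdot \tfrac{Lh}{2}$. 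This yields \eqref{TApprox}.

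The only subtle step is the last one: a naive triangle inequality over the $n$ terms would produce a factor of $n$; the $\sqrt{n}$ improvement comes from exploiting that $\sum_i \langle e_i,h_p\rangle^2 = \|h_p\|^2 \leq 1$ via Cauchy--Schwarz, so this is the place where one must be careful.
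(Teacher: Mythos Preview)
Your proposal is correct and follows essentially the same route as the paper: a coordinate-wise Taylor/integral-remainder bound of $\tfrac{Lh}{2}$ on each $E_i - \nabla^p f(z)[e_i]$, followed by expanding the last argument in the canonical basis and applying Cauchy--Schwarz to extract the $\sqrt{n}$ factor. The only cosmetic difference is that the paper first applies the triangle inequality and then bounds $\sum_i |x^{(i)}| \leq \sqrt{n}$, whereas you apply Cauchy--Schwarz directly to the sum $\sum_i a_i \langle e_i, h_p\rangle$; the two are equivalent.
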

\begin{proof}
Let us fix arbitrary directions $u_1, \ldots, u_{n - 1} \in \R^n$ s.t. $\|u_j\| \leq 1$ for all $j$.
Then, for an arbitrary $1 \leq i \leq n$, we have
\beq \label{TenEiBound}
\ba{rcl}
& & \!\!\!\!\!\!\!\!\!\!\!
| (T - \nabla^p f(z))[u_1, \ldots, u_{n - 1}, e_i] | \\
\\
& = &
\frac{1}{h} \Big| \bigl( \nabla^{p - 1} f(z + h e_i) - \nabla^{p - 1} f(z) - h \nabla^p f(z)[e_i]  \bigr) [u_1, \ldots, u_{n - 1}] \Big| \\
\\
& = &  \Big| 
\int\limits_0^1 (\nabla^p f(z + \tau h e_i) - \nabla^p f(z)) [u_1, \ldots, u_{n - 1}, e_i] d\tau
\Big|
\;\; \overset{(\ref{LipTensor})}{\leq} \;\;
L \int\limits_0^1 \tau h d\tau
\;\; = \;\; 
\frac{L}{2} h,
\ea
\eeq
where we used that $\nabla^p f$ is a symmetric form, and the standard Newton-Leibniz formula.
Therefore,
$$
\ba{rcl}
\|T - \nabla^p f(z) \|
& = & 
\max\limits_{\substack{u_1, \ldots, u_{n - 1}, x \in \R^n \\
\forall j \, \| u_j \| \leq 1, \|x\| \leq 1}}
| (T - \nabla^p f(z))[u_1, \ldots, u_{n - 1}, x] | \\
\\
& = & 
\max\limits_{\substack{u_1, \ldots, u_{n - 1}, x \in \R^n \\
		\forall j \, \| u_j \| \leq 1, \| x \| \leq 1 }}
| (T - \nabla^p f(z))[u_1, \ldots, u_{n - 1},  \sum\limits_{i = 1}^n x^{(i)} e_i  ] |  \\
\\
& \leq & 
\max\limits_{\substack{u_1, \ldots, u_{n - 1}, x \in \R^n \\
		\forall j \, \| u_j \| \leq 1, \| x \| \leq 1 }}
	\sum\limits_{i = 1}^n | x^{(i)} |
	\cdot
	| (T - \nabla^p f(z))[u_1, \ldots, u_{n - 1},  e_i  ] | \\
\\
& \overset{(\ref{TenEiBound})}{\leq} &
\max\limits_{x \in \R^n \, : \, \sum\limits_{i = 1}^n [ x^{(i)} ]^2 \leq 1}
\sum\limits_{i = 1}^n |x^{(i)} | \cdot \frac{L}{2} h
\;\; = \;\;
\frac{L\sqrt{n}}{2} h,
\ea
$$
where the last equation follows from the Cauchy-Schwartz inequality.
\end{proof}

\begin{remark}
	Note that the tensor $T$ defined by~\eqref{TDef} is not symmetric.
	Since our model~\eqref{ModelDef} depends only on the symmetric part of $T$,
	one can replace $T$ by $P_{sym}(T)$ (see Section~\ref{SectionNotation}),
	which possesses the same approximation guarantee:
	$$
	\ba{rcl}
	\| P_{sym}(T) - \nabla^p f(z) \| & \leq & \| T - \nabla^p f(z) \| 
	\;\; \overset{(\ref{TApprox})}{\leq} \;\;
	\frac{L \sqrt{n}}{2} h.
	\ea
	$$
	All our results remain valid regardless of whether $T$ is symmetric or not.
	However, the use of symmetrization affects the implementation of an inner solver for our subproblem 
	(see expression~(\ref{EeGrad}) for computing the gradient of $p$th-order term of our model).
\end{remark}

From~(\ref{TApprox}), we see that the key parameter that controls the finite-difference approximation error is the discretization step $h > 0$. If $h$ is chosen sufficiently small, we can ensure the same progress as in the method using the exact $p$th derivative.

\begin{lemma} \label{LemmaLazyFuncProg}
	Let $\epsilon > 0$ be fixed.
Suppose that Assumption A1 holds, and let $x^{+}$ be an inexact minimizer of the model $M_{\bar{x},\sigma,p}(\cdot)$ defined in (\ref{ModelDef}), where the tensor $T$ is constructed by finite differences as in Lemma~\ref{LemmaFiniteDiff}, with stepsize
\begin{equation} \label{HChoice}
\ba{rcl}
h &\leq&
\frac{4}{\sigma \sqrt{n}} \left[ \frac{\sigma^{p}\cdot\epsilon^{\frac{p+1}{p}}}{
	(8(p + 1))^p \cdot 2^7 \cdot 3^{1/p}\sigma^{1/p}} \right]^{\frac{1}{p+1}}.
\ea
\end{equation}
Assume further that $x^{+}$ satisfies conditions (\ref{InexCond}) and (\ref{MonCond}), and 
that $\|\nabla f(x^{+})\| \geq \epsilon$. If $\sigma \geq 2L$, then
\begin{equation} \label{FuncLazyProg}
\ba{rcl}
f(\bar{x}) - f(x^{+})
&\geq& 
\frac{\epsilon^{\frac{p+1}{p}}}{2^6 \cdot 3^{1/p} \cdot \sigma^{1/p}\cdot (p+1)!}
+ \frac{\sigma}{8\cdot (p+1)!}\|x^{+}-\bar{x}\|^{p+1}
- \frac{c_p L^{p+1}\|z-\bar{x}\|^{p+1}}{\sigma^{p}\cdot (p+1)!}.
\ea
\end{equation}
\end{lemma}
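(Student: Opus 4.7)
The plan is to combine Lemma~\ref{LemmaFiniteDiff}, which controls the finite-difference approximation error, with Lemma~\ref{LemmaCombinedProg}, which already merges the gradient bound and the function decrease. The role of the step size condition~(\ref{HChoice}) is precisely to ensure that the $\delta$-dependent error term in (\ref{FuncGradProg}) is absorbed by half of the gradient-norm term on the right-hand side.

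First, I would invoke Lemma~\ref{LemmaFiniteDiff} applied to the given $z$ and $h$: the finite-difference tensor~(\ref{TDef}) satisfies (\ref{TCond}) with $\delta = \tfrac{L\sqrt{n}}{2}h$. Since the inexactness conditions~(\ref{InexCond}) and (\ref{MonCond}) are assumed and $\sigma \ge 2L$, all hypotheses of Lemma~\ref{LemmaCombinedProg} are met, so
\[
f(\bar{x}) - f(x^{+}) \;\ge\;
\frac{\|\nabla f(x^{+})\|^{\frac{p+1}{p}}}{2^{5} \cdot 3^{1/p}\sigma^{1/p}(p+1)!}
+ \frac{\sigma}{8(p+1)!}\|x^{+}-\bar{x}\|^{p+1}
- \frac{c_p\bigl(\delta^{p+1}+L^{p+1}\|z-\bar{x}\|^{p+1}\bigr)}{\sigma^{p}(p+1)!}.
\]
Using $\|\nabla f(x^{+})\|\ge \epsilon$, the first term is at least $\epsilon^{(p+1)/p}/(2^{5}\cdot 3^{1/p}\sigma^{1/p}(p+1)!)$.

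The crux is then to show that the step size bound~(\ref{HChoice}) implies
\[
\frac{c_p\,\delta^{p+1}}{\sigma^{p}(p+1)!} \;\le\; \frac{\epsilon^{\frac{p+1}{p}}}{2^{6}\cdot 3^{1/p}\sigma^{1/p}(p+1)!}.
\]
Using $L\le \sigma/2$ (from $\sigma\ge 2L$), I would bound $\delta=\tfrac{L\sqrt{n}}{2}h\le \tfrac{\sigma\sqrt{n}}{4}h$, so that $\delta^{p+1}\le (\sigma\sqrt{n}h/4)^{p+1}$. Raising (\ref{HChoice}) to the power $p+1$ gives
\[
(\sigma\sqrt{n}h)^{p+1} \;\le\; 4^{p+1}\cdot \frac{\sigma^{p-1/p}\,\epsilon^{(p+1)/p}}{(8(p+1))^{p}\cdot 2^{7}\cdot 3^{1/p}},
\]
so $\delta^{p+1}\le \sigma^{p-1/p}\epsilon^{(p+1)/p}/[(8(p+1))^{p}\cdot 2^{7}\cdot 3^{1/p}]$. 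Plugging this in and using $c_p\le 2(8(p+1))^{p}$ gives exactly the target $\epsilon^{(p+1)/p}/(2^{6}\cdot 3^{1/p}\sigma^{1/p}(p+1)!)$, as desired.

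To conclude, I would split $1/2^{5}=1/2^{6}+1/2^{6}$: the first half covers the gradient term in~(\ref{FuncLazyProg}), while the second half cancels the $\delta$-contribution in the error term, leaving only the $L^{p+1}\|z-\bar{x}\|^{p+1}$ piece. The main obstacle is mostly clerical, namely matching the powers of $\sigma$, $n$, and the combinatorial factors in~(\ref{HChoice}) with the constants that appear in $c_p$ and in the $1/(2^{5}\cdot 3^{1/p})$ coefficient; once the powers are matched the cancellation is exact by construction of the right-hand side of (\ref{HChoice}).
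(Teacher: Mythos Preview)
Your proposal is correct and follows essentially the same approach as the paper: invoke Lemma~\ref{LemmaFiniteDiff} to bound the finite-difference error, apply Lemma~\ref{LemmaCombinedProg}, and use the step size condition~(\ref{HChoice}) together with $\sigma\ge 2L$ and $c_p\le 2(8(p+1))^p$ so that the $\delta$-term is absorbed by half of the $\epsilon^{(p+1)/p}/(2^5\cdot 3^{1/p}\sigma^{1/p}(p+1)!)$ contribution. The only cosmetic difference is that the paper first rewrites~(\ref{HChoice}) with $L$ in place of $\sigma$ and defines $\delta$ directly as the resulting right-hand side, whereas you keep $\delta=\tfrac{L\sqrt{n}}{2}h$ and use $L\le\sigma/2$ inside the estimate; the algebra is identical.
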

\begin{proof}
Since $\sigma\geq 2L$, it follows from (\ref{HChoice}) that
$$
\ba{rcl}
    h &\leq& 
    \frac{2}{L\sqrt{n}}\left[ \frac{\sigma^{p}
    	\cdot \epsilon^{\frac{p+1}{p}}}{ (8(p + 1))^p \cdot 2^7 \cdot 3^{1/p}\sigma^{1/p}} \right]^{\frac{1}{p+1}}.
\ea
$$
Thus, as $T$ is constructed by finite differences with stepsize $h$, Lemma~\ref{LemmaFiniteDiff} implies that
\beq \label{TDeltaBound}
\ba{rcl}
\|T-\nabla^{p}f(z)\|
& \leq &
\frac{L\sqrt{n}}{2}h
\;\; \leq \;\; \delta,
\ea
\eeq
where
\beq \label{DeltaDef}
\ba{rcl}
\delta & := & 
\Bigl[ \frac{\sigma^{p} \cdot \epsilon^{\frac{p+1}{p}}}{
	(8(p + 1))^p \cdot 2^7 \cdot 3^{1/p}\sigma^{1/p}} \Bigr]^{\frac{1}{p+1}}.
\ea
\eeq
Then, by (\ref{TDeltaBound}), Lemma~\ref{LemmaCombinedProg} and the assumption $\|\nabla f(x^{+})\|>\epsilon$, we have
\beq \label{FuncLazyProgReasoning}
\ba{rcl}
f(\bar{x})-f(x^{+})
&\geq&
\frac{\epsilon^{\frac{p+1}{p}}}{2^5 \cdot 3^{1/p} \cdot \sigma^{1/p}\cdot (p+1)!}
- \frac{ c_p \cdot \delta^{p+1}}{\sigma^{p}\cdot (p+1)!} \\
\\
& &
+ \frac{\sigma}{8\cdot (p+1)!}\|x^{+}-\bar{x}\|^{p+1} 
- \frac{ c_p \cdot L^{p+1}\|z-\bar{x}\|^{p+1}}{\sigma^{p}\cdot (p+1)!}.
\ea
\eeq
Note that by (\ref{DeltaDef}) and the bound $c_p \leq 2 \cdot (8(p + 1))^p$, we have
\beq \label{FuncLazyProgReasoning2}
\ba{rcl}
\frac{c_p \cdot\delta^{p+1}}{\sigma^{p}\cdot (p+1)!}
&=&
\frac{c_p}{2(8(p + 1))^p} \cdot
\frac{\epsilon^{\frac{p+1}{p}}}{2^6 \cdot 3^{1/p}\cdot \sigma^{1/p}\cdot (p+1)!}
\;\; \leq \;\;
\frac{\epsilon^{\frac{p+1}{p}}}{2^6 \cdot 3^{1/p}\cdot\sigma^{1/p}\cdot (p+1)!}.
\ea
\eeq
Then, combining (\ref{FuncLazyProgReasoning}) and (\ref{FuncLazyProgReasoning2}) we conclude that (\ref{FuncLazyProg}) is true.
\end{proof}

We are now ready to prove the main result of this section,
which serves as a building block of our algorithm.
We start with a fixed initialization $x_0 \in \R^n$ and perform $m \geq 1$
steps of the method using a fixed finite-difference approximation tensor $T$, computed 
once at $z := x_0$,
and a fixed regularization parameter $\sigma > 0$. 
We show that for an appropriate choice of $\sigma$, we guarantee strict progress
for the iterates of the method.

\begin{theorem} \label{TheoremLazyProgress}
Let $\epsilon > 0$ be fixed.
Suppose that Assumption A1 holds. 
Given $z\in\mathbb{R}^{n}$, $\sigma>0$, and $m\in\mathbb{N}\setminus\left\{0\right\}$, 
let $\left\{x_{t}\right\}_{t=0}^{m}$ be a sequence of points defined as follows
\small
\beq \label{ThProgIters}
\left\{\begin{array}{lll} x_{0}&=&z,\\
x_{t+1}&\in&\left\{y\in\mathbb{R}^{n}\,:\,M_{x_{t},\sigma,p}(y)\leq f(x_{t})\,\,\text{and}\,\,\|\nabla M_{x_{t},\sigma,p}(y)\|\leq\frac{\sigma}{2 p!}\|y-x_{t}\|^{p}\right\},\quad t=0,\ldots,m-1,
\end{array}
\right.
\eeq
\normalsize
where, for every $t\in\left\{0,\ldots,m-1\right\}$, the model is given by
\beq \label{IterModelDef}
\ba{rcl}
M_{x_{t},\sigma,p}(y) & \equiv & 
f(x_{t}) + \sum\limits_{i=1}^{p-1}\frac{1}{i!}\nabla^{i}f(x_{t})[y-x_{t}]^{i}
+ \frac{1}{p!}T[y-x_{t}]^{p}
+ \frac{\sigma}{(p+1)!}\|y-x_{t}\|^{p+1},
\ea
\eeq
with $T$ being a fixed tensor defined by finite differences, as in Lemma~\ref{LemmaFiniteDiff}, with 
stepsize $h$ satisfying~(\ref{HChoice}). If 
\beq \label{SigmaLarge}
\ba{rcl}
\sigma & \geq & 11 (p + 1)Lm,
\ea
\eeq
and
\beq \label{GradLarge}
\ba{rcl}
\|\nabla f(x_{i+1})\| & \geq &\epsilon\quad\text{for}\,\,i=0,\ldots,t,
\ea
\eeq
for some $t\in\left\{0,\ldots,m-1\right\}$, then
\beq \label{FuncEpsProgress}
\ba{rcl}
f(x_{0})-f(x_{t+1})
&\geq&
\frac{\epsilon^{\frac{p+1}{p}}}{2^6 \cdot 3^{1/p}\cdot\sigma^{1/p}\cdot (p+1)!}(t+1).
\ea
\eeq
\end{theorem}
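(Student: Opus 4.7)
The plan is to apply Lemma~\ref{LemmaLazyFuncProg} repeatedly with $\bar x = x_i$, $x^{+} = x_{i+1}$, and the same reference point $z = x_0$ at which the finite-difference tensor $T$ was constructed, and then telescope. The hypotheses of Lemma~\ref{LemmaLazyFuncProg} are readily checked: the inexactness conditions \eqref{InexCond} and \eqref{MonCond} are built into the definition~\eqref{ThProgIters} of the iterates; the finite-difference step-size $h$ obeys~\eqref{HChoice} by assumption; the gradient lower bound $\|\nabla f(x_{i+1})\|\ge\epsilon$ is exactly \eqref{GradLarge}; and $\sigma\ge 2L$ follows trivially from \eqref{SigmaLarge}. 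Applying \eqref{FuncLazyProg} at step $i$ and summing over $i=0,\ldots,t$ yields
\[
f(x_0)-f(x_{t+1})\;\ge\;(t+1)\,\frac{\epsilon^{\frac{p+1}{p}}}{2^6\cdot 3^{1/p}\,\sigma^{1/p}(p+1)!}\;+\;\frac{\sigma}{8(p+1)!}\sum_{i=0}^{t}\|x_{i+1}-x_i\|^{p+1}\;-\;\frac{c_p L^{p+1}}{\sigma^p(p+1)!}\sum_{i=0}^{t}\|x_0-x_i\|^{p+1}.
\]

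The main technical step is to show that the last negative sum is absorbed by the second (positive) sum thanks to the assumption~\eqref{SigmaLarge}. For this, I would use the triangle inequality $\|x_0-x_i\|\le\sum_{j=0}^{i-1}\|x_{j+1}-x_j\|$ followed by the power-mean (i.e., discrete Jensen's) inequality
\[
\|x_0-x_i\|^{p+1}\;\le\;i^{p}\sum_{j=0}^{i-1}\|x_{j+1}-x_j\|^{p+1}.
\]
Interchanging the order of summation then gives
\[
\sum_{i=0}^{t}\|x_0-x_i\|^{p+1}\;\le\;\sum_{j=0}^{t-1}\|x_{j+1}-x_j\|^{p+1}\sum_{i=j+1}^{t}i^{p}\;\le\;m^{p+1}\sum_{i=0}^{t}\|x_{i+1}-x_i\|^{p+1},
\]
since $t\le m-1$ and $\sum_{i=j+1}^{t}i^{p}\le t\cdot t^{p}\le m^{p+1}$.

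With this reduction, it suffices to verify that $\frac{\sigma}{8}\ge\frac{c_p L^{p+1}m^{p+1}}{\sigma^{p}}$, equivalently $\sigma\ge(8c_p)^{1/(p+1)}Lm$. Using the closed-form upper bound $c_p\le 2\,(8(p+1))^{p}$ from Lemma~\ref{LemmaCombinedProg}, a short calculation gives
\[
(8c_p)^{1/(p+1)}\;\le\;\bigl(16\,(8(p+1))^{p}\bigr)^{1/(p+1)}\;=\;2^{3+1/(p+1)}(p+1)^{p/(p+1)}\;\le\;11(p+1),
\]
so the hypothesis~\eqref{SigmaLarge} suffices to make the bracketed coefficient of $\sum_{i=0}^{t}\|x_{i+1}-x_i\|^{p+1}$ nonnegative. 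Dropping this nonnegative contribution from the telescoped estimate yields precisely~\eqref{FuncEpsProgress}. The one spot requiring care is the combinatorial estimate of $\sum_i\|x_0-x_i\|^{p+1}$ via the power-mean inequality, but everything else is a direct iterative application of Lemma~\ref{LemmaLazyFuncProg} followed by absorbing constants.
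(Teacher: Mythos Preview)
Your proposal is correct and follows essentially the same approach as the paper: apply Lemma~\ref{LemmaLazyFuncProg} at each step, telescope, bound $\sum_i\|x_i-x_0\|^{p+1}$ via the triangle and power-mean inequalities in terms of $\sum_i\|x_{i+1}-x_i\|^{p+1}$, and use $\sigma\ge 11(p+1)Lm\ge(8c_p)^{1/(p+1)}Lm$ to absorb the negative term. The only cosmetic difference is that the paper bounds the combinatorial sum by $t^{p+1}\sum_i r_i^{p+1}$ (uniformly replacing $i^p$ by $t^p$ before summing) whereas you bound it by $m^{p+1}\sum_i r_i^{p+1}$ after interchanging sums; both suffice under~\eqref{SigmaLarge}.
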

\begin{proof}
Assume that $t \geq 1$ and consider $i \in \{0, \ldots, t\}$. 
In view of (\ref{ThProgIters})--(\ref{GradLarge}) and the construction of the tensor $T$, 
Lemma~\ref{LemmaLazyFuncProg} applies with $x^{+}=x_{i+1}$ and $\bar{x}=x_{i}$. 
Hence, we obtain
\beq \label{FuncEpsReasoning}
\ba{rcl}
f(x_{i})-f(x_{i+1})
&\geq&
\frac{\epsilon^{\frac{p+1}{p}}}{2^6 \cdot 3^{1/p}\cdot\sigma^{1/p}\cdot (p+1)!}
+ \frac{\sigma}{8\cdot (p+1)!}\|x_{i+1}-x_{i}\|^{p+1}
- \frac{ c_p \cdot L^{p+1}\|z - x_{i}\|^{p+1}}{\sigma^{p}\cdot (p+1)!}.
\ea
\eeq
Let us denote $r_{i} = \|x_{i+1} - x_{i}\|$. 
Summing inequalities (\ref{FuncEpsReasoning}) over $i = 0,\ldots,t$ and using $z = x_{0}$, we obtain
\beq \label{FuncEpsSummed}
\ba{rcl}
f(x_{0})-f(x_{t+1})
&\geq&
\frac{\epsilon^{\frac{p+1}{p}}}{2^6 \cdot 3^{1/p}\cdot\sigma^{1/p}\cdot (p+1)!}(t+1)
+ \frac{\sigma}{8\cdot (p+1)!}\sum\limits_{i=0}^{t}r_{i}^{p+1} \\
\\
& &
- \frac{ c_p \cdot L^{p+1}}{\sigma^{p}\cdot (p+1)!}
\sum\limits_{i=0}^{t}\|x_{i}-x_{0}\|^{p+1}.
\ea
\eeq
Note that
\beq \label{DistBound}
\ba{rcl}
    \sum\limits_{i=0}^{t}\|x_{i}-x_{0}\|^{p+1}
    &=&
    \sum\limits_{i=1}^{t}\|x_{i}-x_{0}\|^{p+1}
    \;\; = \;\;\sum\limits_{i=1}^{t}\left\|\sum\limits_{j=0}^{i-1}\,x_{j+1}-x_{j}\right\|^{p+1} \\
    \\
    &\leq& 
    \sum\limits_{i=1}^{t}\left(\sum\limits_{j=1}^{i-1}\|x_{j+1}-x_{j}\|\right)^{p+1} 
    \;\; = \;\;
    \sum\limits_{i=1}^{t}\left(\sum\limits_{j=0}^{i-1}r_{j}\right)^{p+1}.
\ea
\eeq
In addition, by the H\"{o}lder inequality, we also have
$$
\ba{rcl}
\sum\limits_{j=0}^{i-1} r_{j}
&\leq&
\left(\sum\limits_{j=0}^{i-1} 1^{\frac{p+1}{p}}\right)^{\frac{p}{p+1}}
\left(\sum\limits_{j=0}^{i-1}r_{j}^{p+1}\right)^{\frac{1}{p+1}}
\;\; \leq \;\; i^{\frac{p}{p+1}}\left(\sum\limits_{j=0}^{i-1}r_{j}^{p+1}\right)^{\frac{1}{p+1}}
\ea
$$
and so, since $i\leq t$, it follows that
\beq \label{HolderIneq}
\ba{rcl}
\left(\sum\limits_{j=0}^{i-1}r_{j}\right)^{p+1}
& \leq & 
i^{p}\left(\sum\limits_{j=0}^{i-1}r_{j}^{p+1}\right)
\;\; \leq \;\; t^{p}\left(\sum\limits_{j=0}^{t}r_{j}^{p+1}\right).
\ea
\eeq
Thus, combining (\ref{DistBound}) and (\ref{HolderIneq}) we get
\beq \label{DistBound2}
\ba{rcl}
\sum\limits_{i=0}^{t}\|x_{i}-x_{0}\|^{p+1}
& \leq & 
t^{p+1}\left(\sum\limits_{i=0}^{t}r_{i}^{p+1}\right).
\ea
\eeq
It follows from (\ref{FuncEpsSummed}) and (\ref{DistBound2}) that
\beq \label{FuncTelescopedBound}
\ba{rcl}
f(x_{0})-f(x_{t+1})
&\geq&
\frac{\epsilon^{\frac{p+1}{p}}}{2^6 \cdot 3^{1/p}\cdot\sigma^{1/p}\cdot (p+1)!}(t+1)
+ \left[\frac{\sigma}{8\cdot (p+1)!} - \frac{ c_p \cdot L^{p+1}\cdot t^{p + 1}}{\sigma^{p}\cdot (p+1)!}\right]
\sum\limits_{i=0}^{t}r_{i}^{p+1}.
\ea
\eeq
By (\ref{SigmaLarge}) we have
$$
\ba{rcl}
\sigma & \geq & 11(p + 1) L m
\;\; \geq \;\; (8 c_p)^{\frac{1}{p+1}}Lt,
\ea
$$
which implies that
\beq \label{SigmaIneq}
\ba{rcl}
\left[\frac{\sigma}{8\cdot (p+1)!} - \frac{c_p \cdot L^{p+1}\cdot t^{p+1}}{\sigma^{p}\cdot (p+1)!}\right]
& \geq & 0.
\ea
\eeq
Thus, by combining (\ref{FuncTelescopedBound}) and (\ref{SigmaIneq}), we conclude that (\ref{FuncEpsProgress}) holds for $t \geq 1$. Next, observe that when $t = 0$, inequality (\ref{FuncEpsReasoning}) also holds for $i = t = 0$, 
and the last term on its right-hand side vanishes, since in this case $x_{i} = x_{0} = z$. Consequently, (\ref{FuncEpsProgress}) also holds for $t = 0$, which completes the proof.
\end{proof}

\newpage

\section{A Lazy Method of Order $(p-1)$}
\label{SectionMethod}

In view of Theorem~\ref{TheoremLazyProgress}, let us define the algorithm
\[
(z^{+},\alpha)=\texttt{LazyTensorSteps}\left(z,T,\sigma,m,\epsilon\right)
\]
that attempts to perform $m$ inexact $p$th-order steps, starting from $z$, using the same tensor $T$ and regularization parameter $\sigma$, and recomputing the derivatives of $f$ up to order $p-1$ at each step. The algorithm stops earlier whenever an $\epsilon$-approximate stationary point of $f$ is found, or when the functional decrease with respect to $f(z)$ does not satisfy condition (\ref{FuncEpsProgress}). If the $m$ steps are performed, or the algorithm stops due to the violation of (\ref{FuncEpsProgress}), the output $z^{+}$ is the point with smallest function value among those generated by the algorithm. Otherwise, if an $\epsilon$-approximate stationary point is found, that point is returned as $z^{+}$. The output $\alpha$ specifies the reason why the algorithm stopped, and thus characterizes the type of the output.

\begin{algorithm}[h!]
	\caption{$(z^{+},\alpha)=\texttt{LazyTensorSteps}(x, T, \sigma, m, \epsilon)$} \label{alg:HessianFree}
	\SetKwInOut{Input}{input}\SetKwInOut{Output}{output}
	\BlankLine
	\textbf{Step 0.} Set $x_0 := x$, $\tilde{x}_{0}:=x$ and $t := 0$.
	\\[0.15cm]
	{\bf Step 1.} 	If $t = m$ then stop and \textbf{return} $(\tilde{x}_t, \, \text{\ttfamily success})$.
	\\[0.15cm]
	{\bf Step 2.}  Compute $x_{t + 1}$ as an approximate solution to the subproblem
	$$
	\ba{rcl}
	\min\limits_{y\in\mathbb{R}^{n}}\,M_{x_{t},\sigma,p}(y)
	& \equiv & f(x_{t}) + \sum\limits_{i=1}^{p-1}\frac{1}{i!}\nabla^{i}f(x_{t})[y-x_{t}]^{i}
	+ \frac{1}{p!}T[y-x_{t}]^{p}+\frac{\sigma}{(p+1)!}\|y-x_{t}\|^{p+1},
	\ea
	$$
	such that
    $$
    \ba{rcl}
        M_{x_{t},\sigma,p}(x_{t+1}) & \leq & f(x_{t})
        \quad\text{and}\quad\|
        \nabla M_{x_{t},\sigma,p}(x_{t+1})\|
        \;\; \leq \;\; \frac{\sigma}{2 p!}\|x_{t+1}-x_{t}\|^{p}.
    \ea
    $$
    Define $\tilde{x}_{t+1}=\arg\min\left\{f(y)\,:\,y\in\left\{\tilde{x}_{t},x_{t+1}\right\}\right\}$.
    \\[0.15cm]
	{\bf Step 3.}
	If $\| \nabla f(x_{t + 1})\| \leq \epsilon$ then stop and \textbf{return} $(x_{t + 1}, \, \text{\ttfamily solution})$.
	\\[0.15cm]
	{\bf Step 4.} If 
    \beq \label{eq:sufficient_decrease}
    \ba{rcl}
    f(x_{0})-f(\tilde{x}_{t+1})
    & \geq &
    \frac{\epsilon^{\frac{p+1}{p}}}{2^6 \cdot 3^{1/p}\cdot\sigma^{1/p}\cdot (p+1)!}(t+1).
    \ea
    \eeq
	holds then set $t := t + 1$ and go to Step 1. Otherwise, stop and \textbf{return} $(\tilde{x}_{t + 1}, \text{\ttfamily halt})$.
\end{algorithm}

\noindent Algorithm 1 outputs both the point $z^{+}$ and the status indicator
\[
\alpha \in \{\texttt{success}, \, \texttt{solution}, \, \texttt{halt}\},
\]
which specifies the termination condition. The value \texttt{success} indicates that all prescribed $m$ steps were completed; \texttt{solution} signals that an $\epsilon$-approximate stationary point of $f$ was identified; and \texttt{halt} means that progress in reducing the objective function was insufficient. As a direct consequence of Theorem~\ref{TheoremLazyProgress}, we have the following result.

\begin{lemma}
\label{lem:2.7}
Suppose that Assumption A1 holds. Given $z\in\mathbb{R}^{n}$, $\epsilon>0$, $\sigma>0$ and $m\in\mathbb{N}\setminus\left\{0\right\}$, let $(z^{+},\alpha)$ be the corresponding output of Algorithm 1 with
\beq \label{eq:fd1}
\ba{rcl}
T &=& 
\sum\limits_{i=1}^{n}
\Bigl( \frac{\nabla^{p-1}f(z+he_{i})-\nabla^{p-1}f(z)}{h} \Bigr) \otimes e_{i},
\ea
\eeq
for some $h>0$. If 
\vspace{-0.5cm}
\beq \label{eq:fd2}
\ba{rcl}
    \sigma & \geq & 11(p + 1)Lm
    \quad\text{and}\quad 
    h \;\; \leq \;\; \frac{4}{\sigma \sqrt{n}} 
    \left[ \frac{\sigma^{p}\cdot\epsilon^{\frac{p+1}{p}}}{ (8(p + 1))^{p} \cdot 2^7 \cdot 3^{1/p}\sigma^{1/p}} \right]^{\tfrac{1}{p+1}}.
\ea
\eeq
then either $\alpha=\texttt{solution}$ (and so $\|\nabla f(z^{+})\|\leq\epsilon$) or $\alpha=\texttt{success}$ and so 
\beq \label{eq:good_decrease}
\ba{rcl}
    f(z) - f(z^{+})
    & \geq &
    \frac{\epsilon^{\frac{p+1}{p}}}{2^6 \cdot 3^{1/p}\cdot \sigma^{1/p}\cdot (p+1)!}m.
\ea
\eeq
\end{lemma}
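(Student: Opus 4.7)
The plan is to argue by case analysis on the termination status $\alpha \in \{\texttt{success}, \texttt{solution}, \texttt{halt}\}$ and to show, using Theorem~\ref{TheoremLazyProgress}, that $\alpha = \texttt{halt}$ is in fact impossible under the stated conditions on $\sigma$ and $h$. The case $\alpha = \texttt{solution}$ is immediate from Step~3 of Algorithm~1, which certifies $\|\nabla f(z^{+})\| \leq \epsilon$ before returning.

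For the case $\alpha = \texttt{success}$, I observe that Step~1 only returns with this status after $t$ has reached $m$, which means that Step~4 of the previous iteration must have verified the inequality~\eqref{eq:sufficient_decrease} at index $t = m-1$. Since the returned point is $\tilde{x}_{m}$, this verification gives exactly
$$
f(z) - f(z^{+}) \;\; = \;\; f(x_{0}) - f(\tilde{x}_{m})
\;\; \geq \;\;
\frac{\epsilon^{(p+1)/p}}{2^{6}\cdot 3^{1/p}\cdot \sigma^{1/p}\cdot (p+1)!}\, m,
$$
which is the desired bound~\eqref{eq:good_decrease}.

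The core of the argument is therefore ruling out $\alpha = \texttt{halt}$, and this is where Theorem~\ref{TheoremLazyProgress} is invoked. Suppose Algorithm~1 reaches Step~4 at some index $t \in \{0,\ldots,m-1\}$ without having already exited at Step~3. Then for every $i \in \{0,\ldots,t\}$ we have $\|\nabla f(x_{i+1})\| > \epsilon$, so condition~\eqref{GradLarge} holds. Condition~\eqref{SigmaLarge} is exactly the first part of~\eqref{eq:fd2}, and the finite-difference tensor~\eqref{eq:fd1} together with the bound on $h$ in~\eqref{eq:fd2} matches the stepsize requirement~\eqref{HChoice}. Hence Theorem~\ref{TheoremLazyProgress} applies and yields
$$
f(x_{0}) - f(x_{t+1})
\;\; \geq \;\;
\frac{\epsilon^{(p+1)/p}}{2^{6}\cdot 3^{1/p}\cdot \sigma^{1/p}\cdot (p+1)!}\,(t+1).
$$

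The only remaining bookkeeping subtlety, which I see as the main (though mild) obstacle, is that Step~4 checks the decrease in terms of $\tilde{x}_{t+1}$ rather than $x_{t+1}$. This is resolved by noting that $\tilde{x}_{t+1} = \argmin\{f(y) : y \in \{\tilde{x}_{t}, x_{t+1}\}\}$, so $f(\tilde{x}_{t+1}) \leq f(x_{t+1})$, and therefore the inequality from Theorem~\ref{TheoremLazyProgress} transfers directly to $\tilde{x}_{t+1}$, certifying~\eqref{eq:sufficient_decrease}. Consequently Step~4 never triggers the $\texttt{halt}$ branch, which together with the two cases above completes the proof.
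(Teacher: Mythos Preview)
Your proposal is correct and follows essentially the same approach as the paper: both rely on Theorem~\ref{TheoremLazyProgress} together with the observation $f(\tilde{x}_{t+1}) \leq f(x_{t+1})$ to show that the \texttt{halt} branch can never be triggered under~\eqref{eq:fd2}. The only cosmetic difference is that you obtain the decrease~\eqref{eq:good_decrease} in the \texttt{success} case by reading it off from the Step~4 test at $t=m-1$, whereas the paper reads it directly from the conclusion of Theorem~\ref{TheoremLazyProgress} with $t_*+1=m$; these are logically equivalent.
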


\begin{proof}
Suppose that $\alpha\neq\texttt{solution}$. Then $z^{+}=\tilde{x}_{t_{*}+1}$ for some $t_{*}<m$ and 
\beq \label{eq:non_criticality}
\ba{rcl}
\|\nabla f(x_{i+1})\|
& \geq &\epsilon\quad\text{for}\,\,i=0,\ldots,t_{*}.
\ea
\eeq
In view of (\ref{eq:fd1}), (\ref{eq:fd2}), (\ref{eq:non_criticality}), and Theorem~\ref{TheoremLazyProgress}, it follows that
\beq     \label{eq:intermediate_decrease}
\ba{rcl}
     f(x_{0})-f(x_{t_{*}+1})
     & \geq &
     \frac{\epsilon^{\frac{p+1}{p}}}{2^6 \cdot 3^{1/p}\cdot\sigma^{1/p}(p+1)!}(t_{*}+1).
\ea
\eeq
Since $f(\tilde{x}_{t_{*}+1})\leq f(x_{t_{*}+1})$, this means that condition (\ref{eq:sufficient_decrease}) was satisfied for $t=t_{*}$. Therefore, the only way Algorithm 1 could have returned $z^{+} = \tilde{x}_{t_{*}+1}$ is if $t_{*} + 1 = m$. Consequently, $\alpha = \texttt{success}$, and by (\ref{eq:intermediate_decrease}) we conclude that (\ref{eq:good_decrease}) holds.
\end{proof}

Building upon Algorithm 1, we can design an adaptive lazy scheme that emulates a $p$th-order method while using only a lower-order oracle of order $p-1$. At the $k$th iteration of the lazy method, we have the current iterate $z_{k}$ and an estimate $L_{k}$ of the Lipschitz constant $L$. Defining 
$$
\ba{rcl}
\sigma_{k} & = & 11(p + 1) L_{k} m 
\quad \text{and} \quad 
h_{k} \;\; = \;\; 
\frac{4}{\sigma_{k} \sqrt{n}} 
\left[ \frac{\sigma_{k}^{p}\cdot\epsilon^{\frac{p+1}{p}}}{ (8(p + 1))^p \cdot 2^7 \cdot 3^{1/p}\cdot\sigma_{k}^{1/p}} \right]^{\frac{1}{p+1}},
\ea
$$
we then compute a $p$th-order tensor $T_{k}$ as in Lemma \ref{lem:2.7} (with $z = z_{k}$ and $h=h_{k}$). The next iterate $z_{k+1}$ is computed by calling Algorithm 1: 
$$
\ba{rcl}
(z_{k+1}, \alpha_{k}) & = & 
\texttt{LazyTensorSteps}(z_{k}, T_{k}, \sigma_{k}, m, \epsilon).
\ea
$$  
If $\alpha_{k} = \texttt{solution}$, this means that $\|\nabla f(z_{k+1})\| \leq \epsilon$, and the algorithm terminates. If $\alpha_{k}=\texttt{success}$, then it follows from Lemma \ref{lem:2.7} that
$$
\ba{rcl}
    f(z_{k})-f(z_{k+1})
    & \geq &
    \frac{\epsilon^{\frac{p+1}{p}}}{2^6 \cdot 3^{1/p} \cdot \sigma_{k}^{1/p}\cdot (p+1)!}m.
\ea
$$
In this case, we say that the iteration was \emph{successful} and, to allow a larger step in the next iteration, we set \(L_{k+1} = L_{k}/2\). Conversely, if \(\alpha_{k} = \texttt{halt}\), we say that the \(k\)th iteration was \emph{unsuccessful} and set \(L_{k+1} = 2 L_{k}\). In what follows we provide  a detailed description of this algorithm.

\begin{algorithm}[h!]
	\caption{\textbf{Lower-Order Implementation of a $p$th-Order Method}} \label{alg:FirstOrderCNM}
    \BlankLine
	\noindent\textbf{Step 0.} Given $z_{0}\in\mathbb{R}^{n}$, $L_{0}>0$, $\epsilon>0$, and $m\in\mathbb{N}\setminus\left\{0\right\}$, set $k:=0$.
    \\[0.2cm]
    \noindent\textbf{Step 1.} If $\|\nabla f(z_{k})\|\leq\epsilon$, stop.
    \\[0.2cm]
    \noindent\textbf{Step 2.} Using
    \begin{equation} \label{eq:3.1}
    \ba{rcl}
    \sigma_{k} & = & 11(p + 1) L_{k} m 
    \quad \text{and} \quad 
    h_{k} \;\; = \;\; \frac{4}{\sigma_{k} \sqrt{n}} 
    \Bigl[ \frac{\sigma_{k}^{p}\cdot\epsilon^{\frac{p+1}{p}}}{ (8(p + 1))^p \cdot 2^7 \cdot 3^{1/p}\sigma_{k}^{1/p}} \Bigr]^{\frac{1}{p+1}},
    \ea
    \end{equation}
    compute the finite difference tensor:
    \begin{equation} \label{eq:3.2}
    \ba{rcl}
    T_{k}
    &= &
    \sum\limits_{i=1}^{n}
    \Bigl( \frac{\nabla^{p-1}f(z_{k}+h_{k}e_{i})-\nabla^{p-1}f(z_{k})}{h_{k}} \Bigr)
    \otimes e_{i}.
    \ea
    \end{equation}
    \noindent\textbf{Step 3.} Attempt to perform $m$ lazy tensor steps using the same tensor $T_{k}$:
    \begin{equation}
    (z_{k+1}, \alpha_{k}) = \texttt{LazyTensorSteps}(z_{k}, T_{k}, \sigma_{k}, m, \epsilon).
    \label{eq:3.3}
    \end{equation}
    \noindent\textbf{Step 4.} Update the estimate of the Lipschitz constant:
    \begin{equation}
    L_{k+1}=\left\{\begin{array}{ll}2L_{k}&\text{if $\alpha_{k}=\texttt{halt}$},\\
    L_{k}&\text{if $\alpha_{k}=\texttt{solution}$},\\
    L_{k}/2&\text{if $\alpha_{k}=\texttt{success}$.}
    \end{array}
    \right.
    \label{eq:3.4}
    \end{equation}
    \noindent\textbf{Step 5.} Set $k:=k+1$ and go back to Step 1.
\end{algorithm}

Note that the number of consecutive unsuccessful iterations is finite: by doubling \(L_{k}\), we will eventually have \(\sigma_{k} = 11 (p + 1) L_{k} m \geq 11(p + 1) L m\), and by Lemma~\ref{lem:2.7} this ensures that \(\alpha_{k} \in \left\{\texttt{solution}, \texttt{success}\right\}\). This fact allows us to establish the following upper bound for the sequence of estimates $L_{k}$.

\newpage
\begin{lemma}
\label{lem:3.2}
Suppose that Assumption A1 holds and let $\left\{z_{k}\right\}_{k=0}^{K}$ be generated by Algorithm 2. Then
\beq \label{eq:3.5}
\ba{rcl}
L_{k} & \leq & 
L_{\max} \;\; \equiv \;\;
\max\left\{L_{0},2L\right\}
\ea
\eeq
for all $k\in\left\{0,\ldots, K\right\}$.
\end{lemma}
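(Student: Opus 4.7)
The plan is to proceed by induction on $k$, using the simple observation that the multiplicative update rule~(\ref{eq:3.4}) can only increase $L_k$ by a factor of two, and only on unsuccessful iterations, while Lemma~\ref{lem:2.7} rules out unsuccessful iterations once $L_k$ is large enough relative to $L$.

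The base case $k = 0$ is immediate from the definition $L_{\max} = \max\{L_0, 2L\} \geq L_0$. For the inductive step, assume $L_k \leq L_{\max}$ and distinguish two cases. If $L_k \geq L$, then $\sigma_k = 11(p+1)L_k m \geq 11(p+1)Lm$, so the hypothesis of Lemma~\ref{lem:2.7} is satisfied (the corresponding stepsize $h_k$ defined in~(\ref{eq:3.1}) is exactly the one required by~(\ref{eq:fd2})). Hence $\alpha_k \in \{\texttt{solution}, \texttt{success}\}$, and by~(\ref{eq:3.4}) we get $L_{k+1} \leq L_k \leq L_{\max}$. If instead $L_k < L$, then in the worst case $\alpha_k = \texttt{halt}$ and $L_{k+1} = 2L_k < 2L \leq L_{\max}$, while in the other two cases $L_{k+1} \leq L_k < L \leq L_{\max}$.

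This completes the induction, and there is no genuine obstacle: the only subtle point is checking that the choice of $h_k$ in~(\ref{eq:3.1}) coincides (with equality, hence in particular with the required inequality) with the finite-difference stepsize hypothesis~(\ref{eq:fd2}) of Lemma~\ref{lem:2.7}, so that the lemma can be invoked whenever $L_k \geq L$. Since both cases of the inductive step preserve the bound $L_{\max}$, the conclusion~(\ref{eq:3.5}) follows.
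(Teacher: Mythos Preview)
Your proof is correct and follows essentially the same inductive argument as the paper. The only cosmetic difference is that the paper splits the inductive step according to the value of $\alpha_k$ (whether the iteration was successful or a \texttt{halt}), whereas you split according to whether $L_k \geq L$; the two case distinctions are logically equivalent here, since Lemma~\ref{lem:2.7} is precisely what links $L_k \geq L$ to $\alpha_k \neq \texttt{halt}$.
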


\begin{proof}
Let us show it by induction over $k$. By the definition of $L_{\max}$, it follows that (\ref{eq:3.5}) is true for $k=0$. Suppose $K\geq 1$ and that (\ref{eq:3.5}) holds for some $k\in\left\{0,\ldots,K-1\right\}$. 
\\[0.2cm]
\noindent\textbf{Case I:} $\alpha_{k}\in\left\{\texttt{solution},\texttt{success}\right\}$.
\\[0.2cm]
In this case, by (\ref{eq:3.4}) and the induction assumption we have
$$
\ba{rcl}
    L_{k+1} & \leq & L_{k} \;\; \leq \;\; L_{\max},
\ea
$$
that is, (\ref{eq:3.5}) holds for $k+1$.
\\[0.2cm]
\noindent\textbf{Case II:} $\alpha_{k}=\texttt{halt}$.
\\[0.2cm]
In this case, we must have
\beq \label{eq:3.6}
\ba{rcl}
L_{k} & < & L,
\ea
\eeq
since otherwise we would obtain $\sigma_{k} = 11(p + 1)L_{k}m \geq 11(p + 1)Lm$. By Lemma~\ref{lem:2.7}, this would imply $\alpha_{k} \in \left\{\texttt{solution}, \texttt{success}\right\}$, contradicting the hypothesis of the present case. Hence, in view of (\ref{eq:3.4}), we deduce
$$
\ba{rcl}
L_{k+1} & = & 2L_{k} 
\;\; \overset{(\ref{eq:3.6})}{<} \;\; 2L \;\; \leq \;\; L_{\max},
\ea
$$
which shows that (\ref{eq:3.5}) also holds for $k+1$.
\end{proof}

Given $\left\{z_{k}\right\}_{k=0}^{K}$ generated by Algorithm~\ref{alg:FirstOrderCNM} with $K\geq 1$, let
\begin{eqnarray}
\mathcal{S}_{K}&=&\left\{k\in\left\{0,\ldots,K-1\right\}\,:\,\alpha_{k}=\texttt{success}\right\}\label{eq:3.7},\\
\mathcal{U}_{K}&=&\left\{k\in\left\{0,\ldots,K-1\right\}\,:\,\alpha_{k}=\texttt{halt}\right\}.\label{eq:3.8}
\end{eqnarray}

\begin{lemma} \label{lem:3.3}
Suppose that Assumptions A1-A2 hold and let $\left\{z_{k}\right\}_{k=0}^{K}$ be generated by Algorithm~\ref{alg:FirstOrderCNM} with $K\geq 2$. Then
\beq \label{eq:3.9}
\ba{rcl}
|\mathcal{S}_{K-1}|
& \leq  & 
\frac{2^6 \cdot\left(3\cdot 11(p + 1) \cdot L_{\max}\right)^{1/p}(p+1)!(f(z_{0})-f_{low})}{m^{(p-1)/p}}
\cdot 
\epsilon^{-\frac{p+1}{p}}.
\ea
\eeq
\end{lemma}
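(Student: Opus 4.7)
The plan is to telescope the per-iteration decrease in $f$ across the successful iterations, using Lemma~\ref{lem:2.7} to lower bound each such decrease and Lemma~\ref{lem:3.2} to uniformly upper bound the regularization parameter~$\sigma_k$.

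First, I would observe that the sequence $\{f(z_k)\}$ is non-increasing. For $k \in \mathcal{S}_{K-1}$ this is immediate from (\ref{eq:good_decrease}) in Lemma~\ref{lem:2.7}; for $\alpha_k = \texttt{solution}$ the decrease bound also yields $f(z_{k+1}) \leq f(z_k)$ (noting that solutions arise inside Algorithm~1 via Step~3, after the monotonicity requirement in Step~2); and for $\alpha_k = \texttt{halt}$ the returned point $z_{k+1} = \tilde{x}_{t+1}$ is by construction the best-so-far iterate inside Algorithm~1, so $f(z_{k+1}) \leq f(\tilde{x}_0) = f(z_k)$.

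Second, for each $k \in \mathcal{S}_{K-1}$, I would combine the success bound (\ref{eq:good_decrease}) with the substitution $\sigma_k = 11(p+1)L_k m$ and Lemma~\ref{lem:3.2}'s bound $L_k \leq L_{\max}$, to obtain
$$
\ba{rcl}
f(z_k) - f(z_{k+1}) & \geq &
\dfrac{\epsilon^{\frac{p+1}{p}} \cdot m}{2^6 \cdot 3^{1/p} \cdot (11(p+1) L_{\max} m)^{1/p} \cdot (p+1)!}
\;\; = \;\;
\dfrac{\epsilon^{\frac{p+1}{p}} \cdot m^{\frac{p-1}{p}}}{2^6 \cdot (3 \cdot 11(p+1) L_{\max})^{1/p} \cdot (p+1)!}.
\ea
$$

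Third, I would sum the non-negative decrements over $k = 0,\ldots,K-1$, keep only the successful indices on the right, and use Assumption A2 in the form $f(z_K) \geq f_{low}$:
$$
\ba{rcl}
f(z_0) - f_{low} & \geq &
\sum\limits_{k=0}^{K-1} \bigl(f(z_k) - f(z_{k+1})\bigr)
\;\; \geq \;\;
\sum\limits_{k \in \mathcal{S}_{K-1}} \bigl(f(z_k) - f(z_{k+1})\bigr)
\;\; \geq \;\;
|\mathcal{S}_{K-1}| \cdot
\dfrac{\epsilon^{\frac{p+1}{p}} \cdot m^{\frac{p-1}{p}}}{2^6 \cdot (3 \cdot 11(p+1) L_{\max})^{1/p} \cdot (p+1)!},
\ea
$$
and rearranging yields exactly (\ref{eq:3.9}).

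There is no real obstacle here: the argument is a standard telescoping over productive iterations, once monotonicity on $\texttt{halt}$ iterations has been argued from the best-so-far rule inside Algorithm~1. The only point that requires care is the bookkeeping of the $m$-powers, where the factor of $m$ coming out of Lemma~\ref{lem:2.7} and the factor $m^{1/p}$ absorbed into $\sigma_k^{1/p}$ combine to give the $m^{(p-1)/p}$ in the denominator of the final bound.
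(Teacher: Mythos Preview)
Your approach is the same as the paper's: telescope the functional decrease, retain only the successful iterations, lower-bound each such decrement via the sufficient-decrease test built into Algorithm~\ref{alg:HessianFree}, and control $\sigma_k$ through Lemma~\ref{lem:3.2}.

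There is one slip. Your monotonicity claim for the case $\alpha_k = \texttt{solution}$ is not justified: the ``monotonicity requirement in Step~2'' of Algorithm~\ref{alg:HessianFree} is $M_{x_t,\sigma,p}(x_{t+1}) \leq f(x_t)$, a condition on the \emph{model} value, not on $f$; when $\sigma_k$ is too small, $M$ need not upper-bound $f$. Moreover, in the \texttt{solution} branch the returned point is $x_{t+1}$ itself, not the best-so-far $\tilde{x}_{t+1}$, so the argmin rule does not help either. Hence $f(z_{k+1}) \leq f(z_k)$ is not guaranteed there. The paper avoids this entirely: since $z_K$ is generated, Step~1 of Algorithm~\ref{alg:FirstOrderCNM} did not trigger at iterations $1,\ldots,K-1$, so $\|\nabla f(z_{k+1})\| > \epsilon$ for $k=0,\ldots,K-2$, which forces $\alpha_k \in \{\texttt{success},\texttt{halt}\}$ on that range. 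One then telescopes only over $k=0,\ldots,K-2$ (recall $\mathcal{S}_{K-1} \subseteq \{0,\ldots,K-2\}$ by definition~(\ref{eq:3.7})) and uses $f(z_{K-1}) \geq f_{low}$. With that adjustment your argument is correct and matches the paper's.
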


\begin{proof}
Since $z_{K}$ has been generated, it follows that
$$
\ba{rcl}
    \|\nabla f(z_{k+1})\| & \geq & \epsilon,\quad k\in\left\{0,\ldots,K-2\right\}.
\ea
$$
By Lemma~\ref{lem:2.7}, we then have
\begin{equation*}
 \alpha_{k}\in\left\{\texttt{success},\texttt{halt}\right\},\quad k\in\left\{0,\ldots,K-2\right\}.
\end{equation*}
Consequently, from (\ref{eq:3.3}) together with Steps 1 and 4 of Algorithm~\ref{alg:HessianFree}, we have
\beq \label{eq:3.10}
\ba{rcl}
f(z_{k+1})
&\leq& 
f(z_{k}),\quad k\in\left\{0,\ldots,K-2\right\}.
\ea
\eeq
Finally, combining A1, (\ref{eq:3.10}), (\ref{eq:3.7}) and Lemma \ref{lem:3.2} we deduce
$$
\ba{rcl}
    f(z_{0})-f_{low}&\geq &
    f(z_{0})-f(z_{T-1})=\sum\limits_{k=0}^{K-2}f(z_{k})-f(z_{k+1})\\
    \\
    &\geq &
    \sum\limits_{k\in\mathcal{S}_{K-1}} f(z_{k})-f(z_{k+1})\\
    \\
    &\geq &
    \sum\limits_{k\in\mathcal{S}_{K-1}} \frac{\epsilon^{\frac{p+1}{p}}}{2^6 \cdot 3^{1/p}\cdot\sigma_{k}^{1/p}\cdot (p+1)!}m\\
    \\
    & = &
    \sum\limits_{k\in\mathcal{S}_{K-1}} \frac{\epsilon^{\frac{p+1}{p}}}{2^6 \cdot 3^{1/p} \cdot (11(p + 1) L_{k}m)^{1/p}\cdot (p+1)!}m\\
    \\
    &\geq &
    \sum\limits_{k\in\mathcal{S}_{K-1}} \frac{\epsilon^{\frac{p+1}{p}}}{2^6 \cdot (3\cdot 11(p + 1) L_{\max}m)^{1/p}\cdot (p+1)!}m\\
    \\
    &=& 
    \frac{\epsilon^{\frac{p+1}{p}}}{2^6 \cdot (3\cdot 11(p + 1) L_{\max})^{1/p}\cdot (p+1)!}m^{(p-1)/p}|\mathcal{S}_{K-1}|,
\ea
$$
and therefore,
$$
\ba{rcl}
    |\mathcal{S}_{K-1}| &\leq& 
    \frac{2^6 \cdot\left(3\cdot 11(p + 1) \cdot L_{\max}\right)^{1/p}(p+1)!(f(z_{0})-f_{low})}{m^{(p-1)/p}}\epsilon^{-\frac{p+1}{p}}.
\ea
$$
\end{proof}

\begin{lemma}
\label{lem:3.4}
Suppose that Assumptions A1-A2 hold and let $\left\{z_{k}\right\}_{k=0}^{K}$ be generated by Algorithm~\ref{alg:FirstOrderCNM} with $K\geq 2$. Then
\beq \label{eq:3.11}
\ba{rcl}
|\mathcal{U}_{K-1}|
& \leq & 
\frac{2^6\cdot\left(3\cdot 11(p + 1) \cdot L_{\max}\right)^{1/p}(p+1)!(f(z_{0})-f_{low})}{m^{(p-1)/p}}\epsilon^{-\frac{p+1}{p}}
+ \log_{2}\left(\frac{L_{\max}}{L_{0}}\right).
\ea
\eeq
\end{lemma}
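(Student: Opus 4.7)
The plan is to combine the uniform upper bound $L_k \le L_{\max}$ from Lemma~\ref{lem:3.2} with the multiplicative update rule~\eqref{eq:3.4} to control the net number of doublings, and then invoke the bound on $|\mathcal{S}_{K-1}|$ from Lemma~\ref{lem:3.3} to absorb the successful iterations.

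First, I would argue that the status \texttt{solution} cannot occur in any iteration $k \in \{0,\ldots,K-2\}$. Indeed, since $z_K$ has been generated, the stopping test in Step~1 of Algorithm~\ref{alg:FirstOrderCNM} was negative at every $k \le K-1$, so $\|\nabla f(z_k)\| > \epsilon$ for those indices. If some call \eqref{eq:3.3} had returned $\alpha_k=\texttt{solution}$ for $k \le K-2$, then by construction $\|\nabla f(z_{k+1})\| \le \epsilon$ with $k+1 \le K-1$, and the algorithm would have stopped before generating $z_K$. Hence $\alpha_k \in \{\texttt{success},\texttt{halt}\}$ for all $k=0,\ldots,K-2$, so that $\mathcal{S}_{K-1}$ and $\mathcal{U}_{K-1}$ partition $\{0,\ldots,K-2\}$.

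Next, I would track $\log_2 L_k$ along these iterations. By~\eqref{eq:3.4}, each $k \in \mathcal{U}_{K-1}$ contributes $+1$ to $\log_2 L_k$ and each $k \in \mathcal{S}_{K-1}$ contributes $-1$, yielding
$$
\log_2 L_{K-1} - \log_2 L_0 \;=\; |\mathcal{U}_{K-1}| - |\mathcal{S}_{K-1}|.
$$
Using Lemma~\ref{lem:3.2} to bound $L_{K-1} \le L_{\max}$, this rearranges to
$$
|\mathcal{U}_{K-1}| \;\le\; |\mathcal{S}_{K-1}| + \log_2\!\left(\frac{L_{\max}}{L_0}\right).
$$

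Finally, substituting the bound on $|\mathcal{S}_{K-1}|$ from Lemma~\ref{lem:3.3} into the right-hand side yields~\eqref{eq:3.11}, completing the proof. The only subtle point is the combinatorial step ensuring that \texttt{solution} does not appear among iterations $0,\ldots,K-2$; once this is in place, the rest is a routine "doubling versus halving" counting argument, which I expect to present as the main obstacle only in the sense of bookkeeping.
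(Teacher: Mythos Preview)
Your proposal is correct and follows essentially the same approach as the paper: ruling out \texttt{solution} for $k\le K-2$, using the multiplicative update~\eqref{eq:3.4} together with $L_{K-1}\le L_{\max}$ to obtain $|\mathcal{U}_{K-1}|\le |\mathcal{S}_{K-1}|+\log_2(L_{\max}/L_0)$, and then applying Lemma~\ref{lem:3.3}. Your justification for excluding \texttt{solution} is in fact slightly more detailed than the paper's, which simply refers back to the same observation made in the proof of Lemma~\ref{lem:3.3}.
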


\begin{proof}
As in the proof of Lemma \ref{lem:3.3}, the fact that $z_{K}$ has been generated implies  
\[
\alpha_{k} \in \{\texttt{success}, \texttt{halt}\},\quad k\in\left\{0,\ldots,K-2\right\}.
\]  
Therefore, by (\ref{eq:3.4}) and Lemma \ref{lem:3.2}, we obtain  
$$ 
\ba{rcl}
L_{0}\left(\frac{1}{2}\right)^{|\mathcal{S}_{K-1}|}\cdot 2^{|\mathcal{U}_{K-1}|} 
& = & L_{K-1} 
\;\; \leq \;\; L_{\max}.
\ea
$$
Dividing both sides by $L_{0}$ and then taking the logarithm, it follows that
$$
\ba{rcl}
-|\mathcal{S}_{K-1}| + |\mathcal{U}_{K-1}| 
& \leq & \log_{2}\!\left(\frac{L_{\max}}{L_{0}}\right),
\ea
$$
and hence  
$$
\ba{rcl}
|\mathcal{U}_{K-1}| & \leq & 
|\mathcal{S}_{K-1}| + \log_{2}\!\left(\frac{L_{\max}}{L_{0}}\right).
\ea
$$
Thus, by Lemma \ref{lem:3.3}, relation (\ref{eq:3.11}) follows.
\end{proof}

Now we can establish an iteration-complexity bound of $\mathcal{O}\left(L_{\max}^{1/p}(f(z_{0})-f_{low})\epsilon^{-\frac{p+1}{p}}\right)$ for Algorithm~\ref{alg:FirstOrderCNM}.

\begin{theorem}
\label{thm:3.1}
Let $\epsilon > 0$ be fixed.
Suppose that Assumptions A1–A2 hold, and let 
$K(\varepsilon)\in\mathbb{N}\cup\{+\infty\}$ denote the termination time 
of Algorithm~\ref{alg:FirstOrderCNM}, i.e., the smallest index such that 
$\|\nabla f(x_{K(\varepsilon)})\|\le\varepsilon$. 
Then
\beq \label{eq:3.12}
\ba{rcl}
    K(\epsilon)& \leq & 
    1+ \frac{2^7\cdot\left(3\cdot 11(p + 1)\cdot L_{\max}\right)^{1/p}(p+1)!(f(z_{0})-f_{low})}{m^{(p-1)/p}}\epsilon^{-\frac{p+1}{p}}
    + \log_{2}\left(\frac{L_{\max}}{L_{0}}\right).
\ea
\eeq
\end{theorem}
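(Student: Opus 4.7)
The plan is to bound $K(\epsilon)$ by decomposing the iterations performed before termination into successful and unsuccessful iterations, whose counts are controlled by Lemmas~\ref{lem:3.3} and~\ref{lem:3.4}, respectively. The trivial cases $K(\epsilon)\in\{0,1\}$ can be dispatched immediately since the right-hand side of (\ref{eq:3.12}) is already at least $1$. So I would assume $K(\epsilon)\ge 2$.

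The key structural observation is that a \texttt{solution} outcome can occur at most once in the whole run of Algorithm~\ref{alg:FirstOrderCNM}, and only at the final index. Indeed, since $z_{K(\epsilon)}$ has been generated, the Step~1 test did not fire at any iteration $k\in\{0,\ldots,K(\epsilon)-1\}$. Combined with Lemma~\ref{lem:2.7}, this forces $\alpha_k\in\{\texttt{success},\texttt{halt}\}$ for every $k\in\{0,\ldots,K(\epsilon)-2\}$ (otherwise an $\alpha_k=\texttt{solution}$ outcome would produce $z_{k+1}$ with $\|\nabla f(z_{k+1})\|\le\epsilon$, contradicting the definition of $K(\epsilon)$ as the first termination index). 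Hence
$$
\{0,\ldots,K(\epsilon)-2\} \;=\; \mathcal{S}_{K(\epsilon)-1}\,\cup\,\mathcal{U}_{K(\epsilon)-1},
$$
a disjoint union, so
$$
K(\epsilon)\;=\;1+|\mathcal{S}_{K(\epsilon)-1}|+|\mathcal{U}_{K(\epsilon)-1}|.
$$

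To conclude, I would simply plug the bounds from Lemma~\ref{lem:3.3} (for $|\mathcal{S}_{K(\epsilon)-1}|$) and Lemma~\ref{lem:3.4} (for $|\mathcal{U}_{K(\epsilon)-1}|$) into the above equality. Since the $\epsilon^{-(p+1)/p}$-coefficients in the two lemmas coincide, their sum doubles the constant from $2^{6}$ to $2^{7}$, while the additive $\log_{2}(L_{\max}/L_{0})$ term is inherited from Lemma~\ref{lem:3.4} alone. This recovers (\ref{eq:3.12}) verbatim.

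I do not anticipate any real obstacle here beyond bookkeeping; the content of the result is already packaged in Lemmas~\ref{lem:3.3}--\ref{lem:3.4}, and the only genuinely new step is the clean partitioning argument that justifies the leading additive constant $1$ and rules out multiple \texttt{solution} outcomes along the run.
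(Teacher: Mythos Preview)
Your proposal is correct and follows essentially the same approach as the paper: dispatch $K(\epsilon)\le 1$ trivially, then for $K(\epsilon)\ge 2$ write $K(\epsilon)-1=|\mathcal{S}_{K(\epsilon)-1}|+|\mathcal{U}_{K(\epsilon)-1}|$ and apply Lemmas~\ref{lem:3.3} and~\ref{lem:3.4}. If anything, your explicit justification that $\alpha_k\in\{\texttt{success},\texttt{halt}\}$ for $k\le K(\epsilon)-2$ is slightly more detailed than the paper's own terse argument.
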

\begin{proof}
If $K(\varepsilon)\leq 1$, then (\ref{eq:3.12}) follows immediately. 
Hence, suppose that $K(\varepsilon)\geq 2$. 
In this case, Lemmas~\ref{lem:3.3} and~\ref{lem:3.4} imply that
$$
\ba{rcl}
K(\varepsilon)-1
& = & 
|\mathcal{S}_{K(\varepsilon)-1}|+|\mathcal{U}_{K(\varepsilon)-1}| \\
\\
& \leq  &
\dfrac{2^7\,(3\cdot 11(p + 1) \, L_{\max})^{1/p}(p+1)!\,(f(z_{0})-f_{\mathrm{low}})}
{m^{(p-1)/p}}\,\varepsilon^{-\frac{p+1}{p}}
+ \log_{2}\!\left(\frac{L_{\max}}{L_{0}}\right),
\ea
$$
and therefore (\ref{eq:3.12}) holds.
\end{proof}

For $x\in\mathbb{R}^{n}$, a single call to the oracle of order $(p-1)$ corresponds to the computation of any nonempty subset of 
\[
\{f(x), \nabla f(x), \ldots, \nabla^{p-1} f(x)\}.
\]
At iteration $t$, Algorithm~\ref{alg:HessianFree} computes
\[
\{f(x_t), \nabla f(x_t), \ldots, \nabla^{p-1} f(x_t)\} \quad \text{and} \quad
\{f(x_{t+1}), \nabla f(x_{t+1})\},
\] 
which amounts to two oracle calls. Since Algorithm~\ref{alg:HessianFree} performs at most $m$ iterations, each run requires at most $2m$ oracle calls. In turn, at iteration $k$, Algorithm~\ref{alg:FirstOrderCNM} executes Algorithm~\ref{alg:HessianFree}, computes $\nabla f(z_k)$ at Step~1, and computes 
\(\{\nabla^{p-1} f(z_k + h_k e_i)\}_{i=1}^n\) at Step~2. 
Therefore, each iteration of Algorithm~\ref{alg:FirstOrderCNM} requires at most $2m + (1+n)=\mathcal{O}\left(m+n\right)$ oracle calls. Thus, by Theorem~\ref{thm:3.1}, Algorithm~\ref{alg:FirstOrderCNM} requires at most 
\beq \label{eq:3.13}
\ba{c}
    \mathcal{O}\left(\left[\frac{m + n}{m^{(p-1)/p}}\right]L_{\max}^{1/p}(f(z_{0})-f_{low})\epsilon^{-\frac{p+1}{p}}\right)
\ea
\eeq
calls to the oracle of order $(p-1)$ to find an $\varepsilon$-approximate stationary point of $f$. Recall that $m\in\mathbb{N}\setminus\{0\}$ is a user-defined parameter specifying the maximum number of iterations of Algorithm~2 (i.e., the number of lazy tensor steps). A natural question then arises: \textit{how should one choose $m$?} One approach is to select $m$ so as to minimize the corresponding factor in the oracle complexity bound~(\ref{eq:3.13}). Let 
$$
\ba{rcl}
    \varphi(m) & = & \frac{m+n}{m^{(p-1)/p}}.
\ea
$$
For $p = 1$, the optimal value is $m = 1$.
Consider $p \geq 2$.
Note that the minimum of the unimodal function $\varphi(\,\cdot\,)$ is achieved when 
$$
\ba{rcl}
    0 & = & \varphi'(m)
    \;\; = \;\; m^{-\frac{2p-1}{p}}\left[m-\frac{(p-1)}{p}(m+n)\right]
\ea
$$
which holds if, and only if,
$
m = (p-1)n.
$
Therefore, with such a choice of $m$ we obtain the following oracle complexity bound for 
Algorithm~\ref{alg:FirstOrderCNM}, combining both cases $p = 1$ and $p \geq 2$.

\begin{corollary}
\label{cor:3.1}
Suppose that A1-A2 holds. Then Algorithm~\ref{alg:FirstOrderCNM} with \fbox{$m=(p-1)n + 1$} requires at most 
\beq \label{Complexity}
   \mathcal{O}\left(n^{1/p}L_{\max}^{1/p}(f(z_{0})-f_{low})\epsilon^{-\frac{p+1}{p}}\right)
\eeq
calls to the oracle of order $(p-1)$ to find an $\epsilon$-approximate stationary point of $f(\,\cdot\,)$.
\end{corollary}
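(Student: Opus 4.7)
The plan is to substitute $m = (p-1)n + 1$ directly into the oracle complexity bound (\ref{eq:3.13}) established in the paragraph preceding Corollary~\ref{cor:3.1}, and verify that the resulting dimensional factor reduces to $\mathcal{O}(n^{1/p})$. All nontrivial work---the iteration-complexity bound of Theorem~\ref{thm:3.1} and the per-iteration cost accounting of $\mathcal{O}(m + n)$ oracle calls---has already been carried out, so the only remaining task is the scalar analysis of $\varphi(m) = (m + n)/m^{(p-1)/p}$ at the prescribed value of $m$.

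First I would split into two cases according to the value of $p$. For $p = 1$ the prescription $m = (p-1)n + 1 = 1$ gives $\varphi(1) = n + 1$, and since in this case $n^{1/p} = n$ and $L_{\max}^{1/p} = L_{\max}$, the bound (\ref{eq:3.13}) reduces directly to $\mathcal{O}\bigl(n\, L_{\max}\,(f(z_0) - f_{low})\,\epsilon^{-2}\bigr)$, which is exactly (\ref{Complexity}). For $p \geq 2$, the stationarity computation in the text just before the statement identifies the continuous minimizer of $\varphi$ on $(0,\infty)$ as $m_\ast = (p-1)n$. Since $m_\ast$ is already a positive integer, passing to $m_\ast + 1 = (p-1)n + 1$ perturbs $\varphi$ by at most a constant factor. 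Concretely, using $pn + 1 \leq (p+1)n$ for $n \geq 1$ together with monotonicity of $t \mapsto t^{(p-1)/p}$, I would estimate
\[
\varphi((p-1)n + 1) \;=\; \frac{pn + 1}{((p-1)n + 1)^{(p-1)/p}} \;\leq\; \frac{(p+1)n}{((p-1)n)^{(p-1)/p}} \;=\; \frac{p+1}{(p-1)^{(p-1)/p}}\, n^{1/p},
\]
where the exponent of $n$ simplifies via $1 - (p-1)/p = 1/p$. Multiplying by the remaining factors of (\ref{eq:3.13}) then produces the announced oracle complexity (\ref{Complexity}).

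There is essentially no obstacle here: the derivative calculation identifying $m_\ast = (p-1)n$ as the unique minimizer of $\varphi$ on $(0,\infty)$ was already carried out in the paragraph above the corollary, and the only point requiring care is confirming that rounding $m_\ast$ up to the integer $m_\ast + 1$ preserves the $n^{1/p}$ rate---which is immediate from unimodality of $\varphi$ and the explicit bound above. The constant hidden in the big-$\mathcal{O}$ of (\ref{Complexity}) is thus of order $(p+1)/(p-1)^{(p-1)/p}$ for $p \geq 2$ and of order $1$ for $p = 1$, combined with the constants already absorbed in (\ref{eq:3.13}) from Theorem~\ref{thm:3.1}.
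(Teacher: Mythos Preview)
Your proposal is correct and follows essentially the same approach as the paper: the corollary is stated there without formal proof, as an immediate consequence of the oracle-complexity bound~(\ref{eq:3.13}) and the minimization of $\varphi(m)=(m+n)/m^{(p-1)/p}$ carried out in the preceding paragraph. Your case split $p=1$ versus $p\geq 2$ and the explicit estimate $\varphi((p-1)n+1)\leq \frac{p+1}{(p-1)^{(p-1)/p}}\,n^{1/p}$ simply make precise what the paper leaves implicit when it writes ``combining both cases.''
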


In particular, our analysis shows that for $p = 1$ (the zeroth-order approximation of the first-order method), 
the best strategy is $m = 1$, which means not using stale derivatives and updating the information at every iteration.

For $p \geq 2$, we show that the best strategy is to choose $m \propto n$ (i.e., updating the $p$th-order tensor approximation once every $(p - 1)n$ iterations). It is also interesting that, as the order of the method $p$ increases, we obtain an improved dependence on the problem dimension in the oracle complexity~(\ref{Complexity}).

\section{Discussion}
\label{SectionDiscussion}

In summary, for objectives with Lipschitz continuous $p$th-order derivatives, it is known \cite{Carmon} that the optimal oracle complexity of $p$th-order tensor methods for finding an $\epsilon$-approximate stationary point is $\mathcal{O}\!\left(\epsilon^{-\frac{p+1}{p}}\right)$, a bound attained by regularized $p$th-order methods (e.g., \cite{Birgin}). Our contribution is a method of order $(p-1)$ which achieves complexity $\mathcal{O}\!\left(n^{1/p}\epsilon^{-\frac{p+1}{p}}\right)$, thereby extending the result of \cite{DoikovGrapiglia} beyond the case $p=2$. To complete the picture, complexity bounds of order 
\(\mathcal{O}(n\epsilon^{-2})\) and \(\mathcal{O}(n^{3/2}\epsilon^{-3/2})\) 
have been established in \cite{grapiglia1} and \cite{DoikovGrapiglia}, 
respectively, for zeroth-order methods applied to the classes of functions 
with Lipschitz continuous gradient and Lipschitz continuous Hessian.
These results are summarized in Table~\ref{tab:1}, where the present work fills the entries highlighted in gray.

\setlength{\tabcolsep}{6pt} 
\renewcommand{\arraystretch}{1.5} 
\begin{table}[htp!]
\centering
\begin{tabular}{|c|c|c|c|c|c|c|}
    \hline
    \rowcolor{tableheader}
    \color{white}\textbf{Method/Class} & \color{white}$\nabla f$ Lip. & \color{white}$\nabla^{2}f$ Lip. & \color{white}$\nabla^{3}f$ Lip. & \,\,\,\color{white}$\ldots$\,\,\,& \color{white} $\nabla^{p-1}f$ Lip. & \color{white} $\nabla^{p}f$ Lip. \\
    \hline
    \textbf{zeroth-order} & $\mathcal{O}\left(n\epsilon^{-2}\right)$ 
    & $\mathcal{O}\left(n^{\frac{3}{2}}\epsilon^{-\frac{3}{2}}\right)$ 
    & ? &  & ?  & ?\\
    \hline
    \textbf{1st-order} & $\mathcal{O}\left(\epsilon^{-2}\right)$ 
    & $\mathcal{O}\left(n^{\frac{1}{2}}\epsilon^{-\frac{3}{2}}\right)$  
    & ?  &  & ? & ? \\
    \hline
    \textbf{2nd-order} & --  
    & $\mathcal{O}\left(\epsilon^{-\frac{3}{2}}\right)$ 
    & \cellcolor{gray!20}$\mathcal{O}\left(n^{\frac{1}{3}}\epsilon^{-\frac{4}{3}}\right)$ 
    &  & ? & ?\\
    \hline
    \textbf{3rd-order} & --  & -- 
    & $\mathcal{O}\left(\epsilon^{-\frac{4}{3}}\right)$ 
    & \cellcolor{gray!20}$\shallowddots$ 
    & ? & ? \\
    \hline
    $\vdots$ & --  & -- 
    & -- & $\shallowddots$ 
    & \cellcolor{gray!20}$\mathcal{O}\left(n^{\frac{1}{p-1}}\epsilon^{-\frac{p}{p-1}}\right)$  & ? \\
     \hline
    \textbf{$(p-1)$th-order} & --  & -- 
    & -- & 
    & $\mathcal{O}\left(\epsilon^{-\frac{p}{p-1}}\right)$& \cellcolor{gray!20}$\mathcal{O}\left(n^{\frac{1}{p}}\epsilon^{-\frac{p+1}{p}}\right)$ \\
    \hline
    \textbf{$p$th-order} & --  & -- 
    & --  &  & -- & $\mathcal{O}\left(\epsilon^{-\frac{p+1}{p}}\right)$ \\
    \hline
\end{tabular}
\caption{Summary of the best-known complexity bounds for finding $\epsilon$-approximate stationary points using $q$th-order methods applied to functions with Lipschitz continuous $p$th-order derivatives ($p \geq q$).}
\label{tab:1}
\end{table}

\newpage

In view of Table~\ref{tab:1}, to complete our understanding of the worst-case complexity of  $q$th-order method applied to functions with Lipschitz continuous $p$th-order derivatives ($p \geq q$), the case 
\begin{equation*}
\fbox{$q < p-1,\quad\text{for}\quad p \geq 3$}
\end{equation*}
still needs to be investigated. Even for the known bounds, outside the main diagonal ($q=p$) it is unclear whether they can be improved, as the question of their tightness ramains open. From a practical perspective, it would be interesting to design a less lazy variant of Algorithm~1 that incorporates quasi-tensor updates~\cite{Welzel}, as in~\cite{CartisJerad}, while still preserving the improved complexity bound with respect to the problem dimension $n$. Additionally, exploring universal lower-order implementations of higher-order methods for convex and nonconvex functions with H\"{o}lder-continuous derivatives would also be of interest \cite{GrapigliaNesterov,Martinez,CGT2}. We plan to address these questions in future work.

\section*{Acknowledgements}

The authors are very grateful to Coralia Cartis and Sadok Jerad for the interesting and motivating discussion, 
which resulted in an improved version of this paper.

\bibliographystyle{plain}
\bibliography{bibliography}

\end{document}